\documentclass[10pt,oneside,british,english]{amsart}
\usepackage[T1]{fontenc}
\usepackage[latin9]{inputenc}
\usepackage[a4paper]{geometry}
\geometry{verbose,tmargin=3cm,bmargin=3cm,lmargin=3cm,rmargin=3cm}
\usepackage{amsthm}
\usepackage{amstext}
\usepackage{amssymb}

\makeatletter
\numberwithin{equation}{section}
\numberwithin{figure}{section}
\theoremstyle{plain}
\newtheorem{thm}{\protect\theoremname}[section]
  \theoremstyle{plain}
  \newtheorem{prop}[thm]{\protect\propositionname}
  \theoremstyle{plain}
  \newtheorem{lem}[thm]{\protect\lemmaname}
  \theoremstyle{remark}
  \newtheorem{rem}[thm]{\protect\remarkname}
  \theoremstyle{plain}
  \newtheorem{cor}[thm]{\protect\corollaryname}
  \theoremstyle{plain}
  \newtheorem{assumption}[thm]{\protect\assumptionname}
  \theoremstyle{remark}
  \newtheorem*{acknowledgement*}{\protect\acknowledgementname}

\newcommand{\rxi}{\xi^{*}}
\newcommand{\rom}{\Omega^{*}}
\newcommand{\ru}{u^{*}}
\newcommand{\rp}{p^{*}}
\newcommand{\rU}{U^{*}}

\newcommand{\lxi}{\hat{\xi}}
\newcommand{\lom}{\hat{\Omega}}
\newcommand{\lu}{\hat{u}}
\newcommand{\lp}{\hat{p}}
\newcommand{\lU}{\hat{U}}

\newcommand{\fxi}{\xi^R}
\newcommand{\fom}{\Omega^R}
\newcommand{\fu}{u^R}

\newcommand{\fU}{U^R}

\newcommand{\Ve}{\Vert}

\makeatother

\usepackage{babel}
  \addto\captionsbritish{\renewcommand{\acknowledgementname}{Acknowledgement}}
  \addto\captionsbritish{\renewcommand{\assumptionname}{Assumption}}
  \addto\captionsbritish{\renewcommand{\corollaryname}{Corollary}}
  \addto\captionsbritish{\renewcommand{\lemmaname}{Lemma}}
  \addto\captionsbritish{\renewcommand{\propositionname}{Proposition}}
  \addto\captionsbritish{\renewcommand{\remarkname}{Remark}}
  \addto\captionsbritish{\renewcommand{\theoremname}{Theorem}}
  \addto\captionsenglish{\renewcommand{\acknowledgementname}{Acknowledgement}}
  \addto\captionsenglish{\renewcommand{\assumptionname}{Assumption}}
  \addto\captionsenglish{\renewcommand{\corollaryname}{Corollary}}
  \addto\captionsenglish{\renewcommand{\lemmaname}{Lemma}}
  \addto\captionsenglish{\renewcommand{\propositionname}{Proposition}}
  \addto\captionsenglish{\renewcommand{\remarkname}{Remark}}
  \addto\captionsenglish{\renewcommand{\theoremname}{Theorem}}
  \providecommand{\acknowledgementname}{Acknowledgement}
  \providecommand{\assumptionname}{Assumption}
  \providecommand{\corollaryname}{Corollary}
  \providecommand{\lemmaname}{Lemma}
  \providecommand{\propositionname}{Proposition}
  \providecommand{\remarkname}{Remark}
\providecommand{\theoremname}{Theorem}

\begin{document}

\title{Asymptotic behaviour of a rigid body with
a cavity filled by a viscous liquid}

\keywords{Navier-Stokes equations, asymptotic behaviour of weak solutions,
rigid body dynamics, conservation of angular momentum, strict Lyapunov
functional.}

\thanks{The author was supported by ERC-2010-AdG no.267802 \textquotedblleft{}Analysis
of Multiscale Systems Driven by Functionals\textquotedblright{}.}

\subjclass[2000]{35Q35, 35Q30, 74F10, 76D03, 35B40, 37L15}

\author{Karoline Disser}
\begin{abstract}
We consider the system of equations modeling the free motion of a
rigid body with a cavity filled by a viscous (Navier-Stokes) liquid.
We give a rigorous proof of Zhukovskiy's Theorem \cite{Zhukovskiy1885},
which states that in the limit $t\to\infty$, the relative fluid velocity
tends to zero and the rigid velocity of the full structure tends to
a steady rotation around one of the principle axes of inertia. 

The existence of global weak solutions for this system was established
in \cite{SilvestreTakahashi2012}. In particular, we prove that every
weak solution of this type is subject to Zhukovskiy's Theorem. Independently
of the geometry and of parameters, this shows that the presence of
fluid prevents precession of the body in the limit. In general, we
cannot predict which axis will be attained, but we show stability
of the largest axis and provide criteria on the initial data which
are decisive in special cases.
\end{abstract}
\maketitle

\section{Introduction\label{sec:Introduction}}

\global\long\def\tom{\tilde{\Omega}}
\global\long\def\io{\bar{\Omega}}
\global\long\def\oO{\Omega_{\infty}}
We consider a system of equations describing the motion of a rigid
body with a cavity filled by a viscous liquid. Let $\mathcal{S}\subset\mathbb{R}^{3}$
be a bounded closed domain which consists of a rigid body part $\mathcal{B}$,
which is also closed, and an open connected cavity domain $\mathcal{F}$
which contains the fluid. In particular, $\mathcal{S}=\mathcal{B}\cup\mathcal{F}$
and there is no {}``leak'', i.e. $\overline{\mathcal{F}}\cap\partial\mathcal{S}=\emptyset$.
We assume that the boundary $\Gamma=\mathcal{B}\cap\overline{\mathcal{F}}$
of $\mathcal{F}$ is of class $C^{2,1}$. We impose no further restrictions
on the geometry of $\mathcal{S}$. 

Without loss of generality, we assume that the fluid has density $\rho_{F}=1$
but the body's density is given by $\rho_{B}(y)>0$, $y\in\mathcal{B}$.
With $\mathcal{S}$ we associate the inertia tensor $I$ given by
\[
a^{T}\mathrm{I}b=\int_{\mathcal{F}}((y-y_{c})\times a)\cdot((y-y_{c})\times b)\,\mathrm{d}y+\int_{\mathcal{B}}((y-y_{c})\times a)\cdot((y-y_{c})\times b)\rho_{B}(y)\,\mathrm{d}y,
\]
for all $a,b\in\mathbb{R}^{3}$, where $y_{c}$ denotes the center
of mass of $\mathcal{S}$. We provide more details on modeling in
Section \ref{sec:Notation-and-MR}. In the absence of external forces
or torques, the equations for the coupled motion of the fluid and
the rigid body are given by
\begin{equation}
\left\{ \begin{array}{rcll}
\bar{u}'+\Omega'\times y-\nu\Delta\bar{u}+\nabla\bar{p}+2\Omega\times\bar{u}+(\bar{u}\cdot\nabla)\bar{u} & = & 0, & \text{in }(0,\infty)\times\mathcal{F},\\
\mathrm{div}\,\bar{u} & = & 0, & \text{in }(0,\infty)\times\mathcal{F},\\
\bar{u} & = & 0, & \text{on }(0,\infty)\times\Gamma,\\
I\io'+\Omega\times I\io & = & 0, & t\in(0,\infty),
\end{array}\right.\label{eq:A-1-1-1}
\end{equation}
where $\nu$ is the viscosity, $\mbox{\ensuremath{\bar{u}}}$ is the
\emph{relative }velocity of the fluid, $\bar{p}$ a pressure potential
and the angular velocity $\Omega$ of $\mathcal{B}$ and $\io$ are
related via 
\begin{equation}
\Omega=\io-I^{-1}\int_{\mathcal{F}}y\times\bar{u}(y)\,\mathrm{d}y.\label{eq:OmBarOm}
\end{equation}
In this frame of reference, the fluid is driven by $\Omega'\times y$
and the Coriolis term $2\Omega\times\bar{u}$ and the rigid body dynamics
are given by Euler's equations with a {}``fluid contribution''. 

In order to state our main result, we refer to Assumption \ref{Assumption on WS}
below, which asks that a weak solution for problem \eqref{eq:A-1-1-1}
satisfies conservation of momenta, the strong energy inequality and
weak-strong uniqueness. 
\begin{thm}
\label{thm:MR}Let $(\bar{u},\Omega)$ be a weak solution for \eqref{eq:A-1-1-1}
on $(0,\infty)\times\mathcal{S}$, satisfying Assumption \ref{Assumption on WS}.
Then $\Vert\bar{u}(t)\Vert_{H^{1}(\mathcal{F})}\to0$ and $\Omega(t)\to\oO$
as $t\to\infty$, where $\oO\in\mathbb{R}^{3}$ is a constant eigenvector
of $I$. 
\end{thm}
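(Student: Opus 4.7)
The plan is to set up a LaSalle-type argument using two built-in invariants of the system. The rigid-body equation in \eqref{eq:A-1-1-1} yields conservation of $|I\bar{\Omega}|^{2}$ (test against $I\bar{\Omega}$ and use the vanishing of the triple product), while Assumption~\ref{Assumption on WS} provides a strong energy inequality of the form
\[
E(t)+\nu\int_{s}^{t}\Vert\nabla\bar{u}(\tau)\Vert_{L^{2}(\mathcal{F})}^{2}\,\mathrm{d}\tau\le E(s),\qquad 0\le s\le t,
\]
for the total kinetic energy $E(\bar{u},\bar{\Omega})$, which is quadratic in $(\bar{u},\bar{\Omega})$. In particular, $E(t)\searrow E_{\infty}$, the dissipation is integrable in time, and $\bar{\Omega}(t)$ stays on the compact ellipsoid $\{|I\bar{\Omega}|^{2}=|I\bar{\Omega}(0)|^{2}\}$. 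The aim is to show that $E$ is a strict Lyapunov function on this ellipsoid whose only critical points are the eigenvectors of $I$.

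The first step is to characterise subsequential limits. Integrability of the dissipation together with the Poincar\'e inequality (valid since $\bar{u}$ vanishes on $\Gamma$) produces $t_{n}\to\infty$ with $\bar{u}(t_{n})\to 0$ in $H^{1}(\mathcal{F})$, and extracting a further subsequence gives $\bar{\Omega}(t_{n})\to\Omega_{*}$ in $\mathbb{R}^{3}$. Passing to the limit in the time-translated problem, where the limiting initial datum $(0,\Omega_{*})$ launches the trivial strong solution, the weak-strong uniqueness in Assumption~\ref{Assumption on WS} forces the $\omega$-limit profile to be a steady state of \eqref{eq:A-1-1-1}; hence $\Omega_{*}\times I\Omega_{*}=0$ and $\Omega_{*}$ is an eigenvector of $I$. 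Since the eigen-directions form a discrete subset of the ellipsoid (at most six antipodal axes in the generic case) and the $\omega$-limit set of a continuous trajectory is connected, the $\omega$-limit collapses to a single equilibrium $\Omega_{\infty}$; the degenerate case of repeated eigenvalues is handled by combining the conservation identity $|I\Omega_{\infty}|^{2}=|I\bar{\Omega}(0)|^{2}$ and the energy identity $\tfrac{1}{2}\Omega_{\infty}\cdot I\Omega_{\infty}=E_{\infty}$ with the continuity of $\bar{\Omega}(\cdot)$ inside the repeated eigenspace.

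The remaining step is to upgrade from subsequential convergence of $\bar{u}$ to full $H^{1}(\mathcal{F})$-convergence as $t\to\infty$. The integrability of $\int\Vert\nabla\bar{u}\Vert_{L^{2}}^{2}\,\mathrm{d}t$ alone does not preclude transient $H^{1}$-spikes between the sampling times $t_n$, so a further application of weak-strong uniqueness is needed: once $(\bar{u}(t_{n}),\bar{\Omega}(t_{n}))$ sits in an $L^{2}(\mathcal{F})\times\mathbb{R}^{3}$-neighbourhood of $(0,\Omega_{\infty})$ in which the linearised semigroup around the steady rotation is contractive, the restarted weak solution must coincide with the unique local strong solution, whose $H^{1}$-decay propagates to all later times. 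Combined with the relation \eqref{eq:OmBarOm}, this yields $\Omega(t)\to\Omega_{\infty}$. The main obstacle is precisely this interplay of the weak-solution framework with the Lyapunov/LaSalle structure: neither angular-momentum conservation nor the energy inequality alone pins down the limit equilibrium or rules out transient oscillations, and it is only together with weak-strong uniqueness that they force the full Zhukovskiy conclusion.
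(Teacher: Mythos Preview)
Your overall architecture---conservation of $|I\bar\Omega|$, the strong energy inequality as a Lyapunov function, LaSalle, and the identification of equilibria as eigenvectors of $I$---matches the paper's. The difference, and the genuine gap, lies in the ``remaining step'': upgrading subsequential $H^{1}$-convergence of $\bar u$ to convergence along the full trajectory.

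You propose to do this by restarting the solution once it enters ``an $L^{2}(\mathcal F)\times\mathbb R^{3}$-neighbourhood of $(0,\Omega_{\infty})$ in which the linearised semigroup around the steady rotation is contractive''. But such a neighbourhood need not exist: the limiting axis $\Omega_{\infty}$ may well be the middle or the smallest principal axis, around which the steady rotation is linearly \emph{unstable} (this is precisely the content of the Lyashenko--Friedlander results cited in the paper). The paper stresses exactly this point in the introduction: ``Since there is no stability, the usual uniform estimates (in the initial data) for the Navier--Stokes problem do not apply and we have to work `trajectory-wise'\,''. So contractivity of the linearisation cannot be invoked, and without it your scheme does not rule out the $H^{1}$-spikes you yourself flag.

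The paper replaces your stability assumption with a quantitative higher-order a~priori estimate obtained by testing \eqref{eq:A-1-1} against $\bar u'$ and feeding in stationary Stokes regularity: if $\|\bar u_{0}\|_{H}\le\delta$, then $\|\bar u(t)\|_{H}\le 2\delta$ for all $t\le T_{\delta}$, where $T_{\delta}>0$ depends only on $\delta$ and $|A_{0}|$ (not on the particular $\bar\Omega_{0}\in\Phi_{|A_{0}|}$). Combined with the integrability of $\int\|\nabla\bar u\|_{2}^{2}$, this lets one find a new time in each half-interval $[T_{*}+\tfrac{1}{2}T_{\delta},\,T_{*}+T_{\delta}]$ at which $\|\bar u\|_{H}\le\delta$ again, so the strong solution is extended forever with $\|\bar u(t)\|_{H}\le 2\delta$. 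Since $\delta>0$ was arbitrary, $\|\bar u(t)\|_{H}\to 0$. Only after this is established does the paper run the LaSalle argument on the (now globally defined) semiflow. Your proposal needs an analogue of this estimate; invoking linear stability of the limit state does not supply it.
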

The claim of this result goes back to Zhukovskiy \cite{Zhukovskiy1885},
and we recall his argument from \cite[Chapter 2.2]{MoiseyevRumyantsev1968}:
Since the relative fluid motion dissipates kinetic energy, it must
come to rest as $t\to\infty$. If we plug $\bar{u}=0$ into \eqref{eq:A-1-1-1},
then in the first line, only 
\[
\Omega'\times y=-\nabla\bar{p}
\]
remains, where $\Omega'\times y$ has a potential only if $\Omega'=0$.
With $\Omega=\io$ from \eqref{eq:OmBarOm}, line 4 then implies that
$\Omega$ is an eigenfunction of $I$.

This argument shows immediately that the liquid part cannot pretend
to be rigid in general and that it excludes precession and the general
Poinsot solutions to Euler's equations 
\[
I\io'+\io\times I\io=0
\]
for the full structure. Note that in the presence of some dissipating
mechanism, this reasoning would also hold for an inviscid fluid (i.e.
the coupling of Euler's equation with the Euler equations). The aim
of this paper is to give a rigorous proof of Zhukovskiy's argument
for the viscous case.

There is a broad background on this problem in the engineering literature
and we point to the monographs \cite{MoiseyevRumyantsev1968} and
\cite{KopachevskyKrein2003} for many more references, in which the
main mathematical issue is the one of stability/instability of solutions
for special geometries. A natural application of the model is the
interpretation of the precession and nutation of planet earth, treated
for example by Poincaré \cite{Poincare1910}, and in \cite{StewartsonRoberts1963},
\cite{GreenspanHoward1963}.

In mathematical analysis, there is an extensive literature on the
\emph{complement problem} of the movement of a free rigid body immersed
in a fluid. We refer to \cite{Galdi02} for a survey of this topic,
and to \cite{GaldiSilvestre02}, \cite{Feireisl03}, \cite{GGH10a}
and \cite{Takahashi03} for additional existence and regularity theory.
In the absence of external forces, this system is dissipative and
both body and fluid must approach the rest state \cite{CumsilleTakahashi08},
but under the influence of external forces like gravity, many questions
regarding asymptotics are still open. 

For \eqref{eq:A-1-1-1}, global existence of weak solutions and local
existence of mild and strong solutions were proved in \cite{SilvestreTakahashi2012}.
We draw on their results and prove that these solutions satisfy the
assumptions of Theorem \ref{thm:MR}. A result similar to ours as
well as numerical studies of the problem were announced in \cite{GaldiMazzoneZunino2013}.

In \cite{LyashenkoFriedlander1998}, local existence of regular solutions
for the inviscid problem was proved. However, the main results of
\cite{LyashenkoFriedlander1998} and \cite{LyashenkoFriedlander1998B}
are explicit criteria for the derivation of non-linear instability
from linear instability in the inviscid limit. In particular, an instability
result for uniform rotation around one axis is proved also for small
viscosity. These results need a symmetry assumption for the structure
around the unstable axis and they are based on spectral linear stability
and instability for special geometries shown by Lyashenko \cite{Lyashenko1998},
\cite{Lyashenko1998B} and studied in \cite{KopachevskyKrein2003}.
Lyashenko and Friedlander's work directly addresses what is the main
feature of this system from a mathematical point of view: It is given
by the strong coupling of a non-dissipative and a dissipative part
with limited access to the actual (Navier-Stokes) dissipation. 

The general problem of specifying the limit angular velocity $\oO$
from initial data thus seems to be very difficult and may not be solvable
on the level of weak solutions. In Section \ref{sec:A-priori-characterizations},
we combine the conservation of total angular momentum and dissipation
of kinetic energy for this system in a very simple argument to define
an open subset of initial data which will always approach the largest
axis, proving stability in this sense. Making the set larger, we can
still show that the smallest axis will not be attained from any of
the initial data it contains. However, these estimates are crude,
not depending on viscosity or the actual dissipation of energy and
not sufficient for showing instability, e.g. of the {}``middle''
axis, which might be expected from classical rigid body dynamics \cite[Thm. 15.3.1]{MarsdenRatiu1999Book}. 

The outline of the proof and the organization of the paper is as follows.
Sections \ref{sec:Notation-and-MR} to \ref{sec:Global-in-time-existence}
mostly recount known results which are needed later on. In Section
\ref{sec:Notation-and-MR}, we introduce the model, fix some notation
and recall the change of coordinates to a Lagrangian formulation (with
respect to the rigid body). In Section \ref{sec:Local-in-time-existence},
we prove existence and continuous dependence on the data for local-in-time
strong solutions. We recall the weak formulation and existence proof
for global solutions given in \cite{SilvestreTakahashi2012} in Section
\ref{sec:Global-in-time-existence}.

In Section \ref{sec:Conservation-of-Momenta}, we show that every
weak solution given in \cite{SilvestreTakahashi2012} satisfies conservation
of the total linear momentum and of the total angular momentum. Even
though this shows that the kinetic energy 
\[
E(t)=\Vert\bar{u}(t,\cdot)+\Omega\times\cdot\Vert_{L^{2}(\mathcal{S})}^{2}
\]
does not decay to zero in general in this system, we still want to
show $\bar{u}(t)\to0$. Since there is no stability, the usual uniform
estimates (in the initial data) for the Navier-Stokes problem do not
apply and we have to work {}``trajectory-wise''. We provide a preparatory
higher-order a priori estimate on $\bar{u}$ in Section \ref{sec:Properties-of-SDS}.
In Section \ref{sec:Properties-of-WS}, we prove that global weak
solutions constructed in \cite{SilvestreTakahashi2012} satisfy the
strong\emph{ }energy inequality and weak-strong uniqueness. In Section
\ref{sec:Strong-solutions-for-large-time}, these results are combined
in order to prove that every weak solution becomes strong eventually
and that the relative fluid velocity then goes to rest. 

Section \ref{sec:Application-of-LaSalle's} concerns the second part
of Zhukovskiy's argument and the asymptotics for $\Omega(t)$. We
show that the kinetic energy is a strict Lyapunov functional on regular
(large-time) trajectories and characterize the equilibrium set. We
apply a version of LaSalle's invariance principle in order to prove
Theorem \ref{thm:MR}. 

Finally, in Section \ref{sec:A-priori-characterizations}, we derive
simple criteria which characterize the limit axis in special cases.

\section{Model and Notation\label{sec:Notation-and-MR}}

In order to fix some notation and for technical reasons, we will first
derive the model in Eulerian coordinates $x\in\mathbb{R}^{3}$, differing
from \eqref{eq:A-1-1-1}. This implies that the positions $\mathcal{B}(t)$,
$\mathcal{F}(t)$ and $\mathcal{S}(t)$ depend on time, with $\mathcal{S}(0)=\mathcal{S}$.
The body's mass is given by $\mathrm{m}_{B}:=\int_{\mathcal{B}}\rho_{B}(x)\,\mathrm{d}x$
and its inertia tensor $J_{B}(t)$ is given by 
\[
aJ_{B}(t)b=\int_{\mathcal{B}(t)}a\times(x-x_{B}(t))\cdot b\times(x-x_{B}(t))\rho_{B(t)}(x)\,\mathrm{d}x\quad\text{for all }a,b\in\mathbb{R}^{3}.
\]
The fluid motion is governed by the Navier-Stokes equations, driven
by an initial velocity and a no-slip boundary condition at $\Gamma(t)$,
where fluid and rigid body velocity must thus coincide. The rigid
body's center of mass 
\[
x_{B}(t)=\frac{1}{\mathrm{m}_{B}}\int_{\mathcal{B}(t)}x\rho_{B(t)}(x)\,\mathrm{d}x
\]
has a translational velocity $\eta$ and the body rotates with an
angular velocity $\omega$ with respect to $x_{B}$. It is driven
by its initial velocity $\eta_{0}+\omega_{0}\times(x-x_{B}(0))$ and
by the force $\int_{\Gamma(t)}\mathbf{T}(v,q)n(t)\, d\sigma$ and
the torque $\int_{\Gamma(t)}(x-x_{c}(t))\times\mathbf{T}(v,q)n(t)\, d\sigma$
exerted by the fluid velocity $v$ and pressure $q$. Here, 
\[
\mathbf{T}(v,q)=2\nu D(v)-q\mathrm{Id}
\]
is the Newtonian fluid stress tensor given by a constant viscosity
$\nu>0$ and the symmetric part of the gradient $D(v)=\frac{1}{2}[(\nabla v)+(\nabla v)^{T}]$,
where $n(t,x)$ denotes the outer normal of $\mathcal{B}(t)$ at $x\in\Gamma(t)$.
The system may additionally be subject to external forces and torques
$l_{0}$, $l_{1}$ and $l_{2}$ and in full, the equations read 

\begin{equation}
\left\{ \begin{array}{rcll}
v'-\mathrm{div}\,\mathbf{T}(v,q)+(v\cdot\nabla)v & = & l_{0}, & \text{in }\mathcal{Q}_{\mathcal{F}},\\
\mathrm{div}\, v & = & 0, & \text{in }\mathcal{Q}_{\mathcal{F}},\\
v(t,x)-\omega(t)\times(x-x_{B}(t))-\eta(t) & = & 0, & \text{on }\mathcal{Q}_{\Gamma},\\
v(0) & = & u_{0} & \text{on }\mathcal{F},\\
\mathrm{m}_{B}\eta'+\int_{\Gamma(t)}\mathbf{T}(v,q)n(t)\, d\sigma & = & l_{1}, & t>0,\\
(J_{B}\omega)'+\int_{\Gamma(t)}(x-x_{c}(t))\times\mathbf{T}(v,q)n(t)\, d\sigma & = & l_{2}, & t>0,\\
\eta(0)=\xi_{0} & \text{and} & \omega(0)=\Omega_{0},
\end{array}\right.\label{eq:Problem}
\end{equation}
where $\mathcal{Q}_{\mathcal{F}}:=\{(t,x)\in(0,\infty)\times\mathbb{R}^{3}:x\in\mathcal{F}(t)\}$
and $\mathcal{Q}_{\Gamma}$ is defined accordingly. In order to replace
the non-cylindrical domain $\mathcal{Q}_{\mathcal{F}}$ with a cylindrical
one, we change coordinates to a Lagrangian formulation with respect
to the rigid body $\mathcal{B}$. In particular, $x_{B}$ and $J_{B}$
will become independent of time. Without loss of generality, we set
$x_{B}(0)=0$. This is a standard procedure for this type of problem,
but we quickly repeat the construction as there is the technical detail
of having to deal with three centers of mass, $x_{B},x_{F}$ and $x_{c}$
and three inertia tensors $I_{B},I_{F}$ and $I$ of body, fluid and
the full structure. In particular, note that new coordinates are chosen
with respect to $x_{B}$, but for $t\to\infty$, $x_{c}$ and $I$
are more relevant. 

Let $m(t)$ denote the skew-symmetric matrix satisfying $m(t)x=\omega(t)\times x$.
Note that in the following, we denote derivatives with respect to
time by $\omega',v',\dots$, regardless of whether they are full or
partial. We consider the differential equation
\[
\left\{ \begin{array}{rcll}
X'(t,y) & = & m(t)(X(t,y)-x_{B}(t))+\eta(t), & (t,y)\in(0,T)\times\mathbb{R}^{3},\\
X(0,y) & = & y, & y\in\mathbb{R}^{3}.
\end{array}\right.
\]
As $\mathrm{div}\,(m(t)X(t,y))=0$, its solution is of the form $X(t,y)=Q(t)y+x_{B}(t),$
with some matrix $Q(t)\in\mathrm{SO}(3)$ for every $t\in(0,T)$.
In particular, $Q\in H^{2}(0,T;\mathbb{R}^{3\times3})$, if $\eta,\omega\in H^{1}(0,T)$,
justifying this change of coordinates a posteriori for strong solutions.
The corresponding inverse $Y(t)$ of $X(t)$ is given by 
\[
Y(t,x)=Q^{T}(t)(x-x_{B}(t)).
\]
For $(t,y)\in[0,T)\times\mathbb{R}^{3}$, we thus define
\begin{eqnarray}
u(t,y) & := & Q^{T}v(t,X(t,y)),\nonumber \\
p(t,y) & := & q(t,X(t,y)),\nonumber \\
\Omega(t) & := & Q^{T}(t)\omega(t),\label{eq:DefDerTrafo}\\
\xi(t) & := & Q^{T}(t)\eta(t),\nonumber \\
f_{i}(t,y) & := & Q^{T}l_{i}(t,X(t,y)).\nonumber 
\end{eqnarray}
It follows that the transformed inertia tensor $I_{B}=Q^{T}(t)J(t)Q(t)$
for the rigid body part $\mathcal{B}$ no longer depends on time and
that for all $a,b\in\mathbb{R}^{3},$ 
\begin{equation}
aI_{B}b=\int_{\mathcal{B}}(a\times y)\cdot(b\times y)\,\rho_{B}(y)\mathrm{d}y.\label{eq:inertiaTensorTransformiert}
\end{equation}
The definition 
\[
T(u,p):=2\nu D_{y}(u)-\nabla_{y}p,
\]
where $D_{y},\nabla_{y}$ here explicitly indicate differentiation
with respect to the new coordinates $y$, implies that 
\[
\int_{\Gamma(t)}\mathbf{T}(v,q)n(t)\,\mathrm{d}\sigma=Q\int_{\Gamma}T(u,p)N\,\mathrm{d\sigma}
\]
and 
\[
\int_{\Gamma(t)}(x-x_{B}(t))\times\mathbf{T}(v,q)n(t)\,\mathrm{d}\sigma=Q\int_{\Gamma}y\times T(u,p)N\,\mathrm{d}\sigma.
\]
On the cylindrical domain $(0,T)\times\mathcal{F}$ with outer normal
vector $-N=-Q^{T}(t)n(t)$, we obtain the system of equations 
\begin{equation}
\left\{ \begin{array}{rcll}
u'-\mu\Delta u+\nabla p+\Omega\times u+((u-\xi-\Omega\times y)\cdot\nabla)u & = & f_{0}, & \text{in }(0,T)\times\mathcal{F},\\
\mathrm{div}\, u & = & 0, & \text{in }(0,T)\times\mathcal{F},\\
u(t,y)-\Omega(t)\times y-\xi(t) & = & 0, & \text{on }(0,T)\times\Gamma,\\
u(0) & = & u_{0} & \text{in }\mathcal{F},\\
\mathrm{m}_{B}\xi'+\mathrm{m}_{B}(\Omega\times\xi)+\int_{\Gamma}T(u,p)N\,\mathrm{d}\sigma & = & f_{1}, & t\in(0,T),\\
I_{B}\Omega'+\Omega\times(I_{B}\Omega)+\int_{\Gamma}y\times T(u,p)N\,\mathrm{d}\sigma & = & f_{2}, & t\in(0,T),\\
\xi(0)=\xi_{0} & \text{and} & \Omega(0)=\Omega_{0},
\end{array}\right.\label{eq:FFP transformiert}
\end{equation}
to be equivalent to \eqref{eq:Problem} with the unknowns $u,p$ the
new fluid velocity and pressure and $\xi,\Omega$ the rigid body's
translational and angular velocity. We denote the center of mass of
the fluid part $\mathcal{F}$ by 
\[
x_{F}(0)=y_{F}=\frac{1}{\mathrm{m}_{F}}\int_{\mathcal{F}}y\,\mathrm{d}y,
\]
and the center of mass of the full structure $\mathcal{S}$ by 
\begin{equation}
x_{c}(0)=y_{c}=\frac{1}{\mathrm{m}}\left[\int_{\mathcal{F}}y\,\mathrm{d}y+\int_{\mathcal{B}}y\rho_{B}(y)\,\mathrm{d}y\right]=\frac{m_{F}}{\mathrm{m}}y_{F},\label{eq:comO}
\end{equation}
where $m_{F}+m_{B}=\mathrm{m}$ is the total mass. We often use the
calculation rules
\begin{eqnarray}
a\times(b\times c) & = & b(a\cdot c)-c(a\cdot b),\label{eq:aTimesb}\\
(a\times b)\cdot(c\times d) & = & (a\cdot c)(b\cdot d)-(a\cdot d)(b\cdot c),\label{eq:aTimesbII}
\end{eqnarray}
for all $a,b,c\in\mathbb{R}^{3}$. To the full structure, we associate
the inertia tensor $I$ calculated with respect to the center of mass
$y_{c}$, 
\[
a^{T}\mathrm{I}b=\int_{\mathcal{F}}((y-y_{c})\times a)\cdot((y-y_{c})\times b)\,\mathrm{d}y+\int_{\mathcal{B}}((y-y_{c})\times a)\cdot((y-y_{c})\times b)\rho_{B}(y)\,\mathrm{d}y
\]
as in Section \ref{sec:Introduction}. Using \eqref{eq:inertiaTensorTransformiert},
\eqref{eq:comO} and \eqref{eq:aTimesbII}, it follows that 
\begin{equation}
\mathrm{I}b=(I_{B}+I_{F})b+\mathrm{m}y_{c}\times(y_{c}\times b),\label{eq:ItoIBIF}
\end{equation}
where $I_{F}$ is the inertia tensor of $\mathcal{F}$, calculated
with respect to the center of mass $0$ of the rigid part. 

In the following, to a triple $u,\xi,\Omega$ of solutions, we often
associate the function 
\begin{equation}
U(t,y):=\begin{cases}
\begin{array}{ll}
u(t,y), & y\in\mathcal{F},\\
\Omega(t)\times y+\xi(t), & y\in\mathcal{B},
\end{array}\end{cases}\label{eq:Identifications}
\end{equation}
and vice versa. To both, we associate the \emph{relative fluid velocity
\[
\bar{u}(t,y):=u(t,y)-\xi(t)-\Omega(t).
\]
}Note that it is shown below that if $U$ is a weak solution of \eqref{eq:FFP transformiert},
then 
\[
\bar{u}(t)\in H_{0}^{1}(\mathcal{F}):=\{u\in H^{1}(\mathcal{F});u|_{\Gamma}=0\}
\]
 for almost all $t$ and, in addition, $\bar{u}(t)\in L_{\sigma}^{2}(\mathcal{F})$,
where 
\[
L_{\sigma}^{2}(\mathcal{F}):=\{u\in L^{2}(\mathcal{F});\mathrm{div}\, u=0,\, u|_{\Gamma}\cdot n=0\,\text{in a weak sense}\}
\]
 is the usual space of solenoidal $L^{2}$-functions and $H^{1}(\mathcal{F})$
is the usual $L^{2}$-Sobolev space of order $1$. We define 
\[
H:=L_{\sigma}^{2}(\mathcal{F})\cap H_{0}^{1}(\mathcal{F})
\]
 and for $1\leq q\leq\infty$, $\Vert\cdot\Vert_{q}:=\Vert\cdot\Vert_{L^{q}(\mathcal{F})}$
denotes the $L^{q}$-norm on $\mathcal{F}$ . We often apply Poincaré's
inequality to $\bar{u}$ with constant $C_{p}$, $\Vert\bar{u}\Vert_{2}\leq C_{p}\Vert\nabla\bar{u}\Vert_{2}$.

\section{\label{sec:Local-in-time-existence}Local-in-time existence of strong
solutions}

For sufficiently regular solutions, it is required that the initial
data $u_{0},\xi_{0},\Omega_{0}$ satisfy the compatibility condition
\[
U_{0}\in\mathcal{W}:=\{(u_{0}.\xi_{0},\Omega_{0})\in H^{1}(\mathcal{F})\times\mathbb{R}^{6};\mathrm{div}\, u_{0}=0,\, u_{0}|_{\Gamma}(y)=\Omega_{0}\times y+\xi_{0}\},
\]
where we refer to \cite[Rem. 2.3c)]{GGH10a} for a discussion of this
constraint in this context. In particular, $\mathcal{W}$ is the time-trace
space for the strong solution and it follows that $\bar{u}_{0}\in H$. 
\begin{thm}
\label{thm:mainResultN-1}Let $U_{0}\simeq(u_{0},\xi_{0},\Omega_{0})\in\mathcal{W}$
and 
\[
F:=(f_{0},f_{1},f_{2})\in L^{2}(0,T_{0};L^{2}(\mathcal{F}))\times L^{2}(0,T_{0};\mathbb{R}^{6})=:\mathcal{V}^{T_{0}}
\]
 be given. Then there exists $0<T_{max}\leq T_{0}$ such that problem
\eqref{eq:Problem} admits a unique strong solution
\begin{eqnarray*}
u & \in & L^{2}(0,T;H^{2}(\mathcal{F}))\cap H^{1}(0,T;L^{2}(\mathcal{F}))=:X_{2,2}^{T}\\
\nabla p & \in & L^{2}(0,T;L^{2}(\mathcal{F})),\\
(\Omega,\xi) & \in & H^{1}(0,T;\mathbb{R}^{6}),
\end{eqnarray*}
for all $0<T<T_{max}$. Moreover, the solution in these spaces depends
continuously on the data $(U_{0},F)$ in $\mathcal{W}\times\mathcal{V}^{T_{max}}$. 
\end{thm}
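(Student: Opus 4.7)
The plan is to establish existence and uniqueness on the fixed domain via a Banach fixed point argument, relying on $L^2$-in-time maximal regularity for the linearised coupled fluid-structure system \eqref{eq:FFP transformiert}. I would first pass to the relative velocity $\bar{u}(t,y)=u(t,y)-\xi(t)-\Omega(t)\times y$, which turns the no-slip condition on $\Gamma$ into a homogeneous one so that $\bar{u}(t)\in H$. The equations then split into a principal linear part---a Stokes-type operator for $\bar{u}$ coupled through the boundary stress integrals with the ODEs for $\xi$ and $\Omega$---and nonlinear, lower-order terms: $(\bar{u}\cdot\nabla)\bar{u}$, Coriolis contributions like $\Omega\times\bar{u}$, the Euler term $\Omega\times(I_{B}\Omega)$, and coupling terms carrying $\Omega'\times y$ that arise from the change of variables.

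The first main step is to treat the linearised problem on the product space $X:=L_{\sigma}^{2}(\mathcal{F})\times\mathbb{R}^{6}$ equipped with the mass-weighted inner product that incorporates $\mathrm{m}_{B}$ and $I_{B}$ and renders the surface-stress couplings symmetric. Along the lines of \cite{GGH10a} and \cite{SilvestreTakahashi2012}, one shows that the resulting operator $-\mathcal{A}$, with domain $D(\mathcal{A})=(H^{2}(\mathcal{F})\cap H)\times\mathbb{R}^{6}$, generates an analytic semigroup on $X$, and $L^{2}$-maximal regularity follows from de Simon's theorem, yielding a bounded solution map
\begin{equation*}
(\partial_{t}+\mathcal{A})^{-1}\colon\mathcal{V}^{T}\longrightarrow X_{2,2}^{T}\times H^{1}(0,T;\mathbb{R}^{6}),
\end{equation*}
together with the pressure recovery $\nabla p\in L^{2}(0,T;L^{2}(\mathcal{F}))$ from a de Rham-type argument. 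This is the main obstacle: the weighted inner product must be chosen carefully so that the boundary integrals $\int_{\Gamma}T(u,p)N\,\mathrm{d}\sigma$ and $\int_{\Gamma}y\times T(u,p)N\,\mathrm{d}\sigma$ appear as symmetric couplings between the fluid block and the $(\xi,\Omega)$-block rather than as unbounded perturbations; once sectoriality is established in this setting, the remainder is perturbative.

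With this linear theory in hand, I would set up a fixed-point map $\Phi$ on a closed ball in $X_{2,2}^{T}\times H^{1}(0,T;\mathbb{R}^{6})$ by sending $(\tilde{u},\tilde{\xi},\tilde{\Omega})$ to the solution of the linear problem whose right-hand sides are obtained by substituting $(\tilde{u},\tilde{\xi},\tilde{\Omega})$ into each nonlinearity. Sobolev embeddings ($H^{2}\hookrightarrow L^{\infty}$, $H^{1}\hookrightarrow L^{6}$) and interpolation place the nonlinearities into $\mathcal{V}^{T}$; the bilinear estimates pick up a factor $T^{\alpha}$ with some $\alpha>0$, so that fixing the ball radius in terms of $\Vert U_{0}\Vert_{\mathcal{W}}+\Vert F\Vert_{\mathcal{V}^{T_{0}}}$ and choosing $T$ sufficiently small yields strict contraction of $\Phi$ and a unique fixed point. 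A standard continuation argument produces the maximal existence time $T_{max}\leq T_{0}$, and continuous dependence on $(U_{0},F)\in\mathcal{W}\times\mathcal{V}^{T_{max}}$ in the norm of $X_{2,2}^{T}\times H^{1}(0,T;\mathbb{R}^{6})$ is an immediate consequence of applying the contraction estimate to the difference of two solutions.
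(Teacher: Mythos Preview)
Your strategy---maximal regularity for the coupled linear problem followed by a contraction-mapping argument---is exactly what the paper does, citing the linear estimate from \cite{GGH10a} (Proposition~\ref{thm:linearResult-1}) rather than rederiving it via de Simon. Two technical points deserve care, however. First, the term $\Omega'\times y$ (and likewise $\xi'$) that appears when you pass to $\bar{u}$ is \emph{not} a lower-order perturbation: it is linear and of top order in the unknowns, so it must sit inside the principal operator $\mathcal{A}$ rather than on the right-hand side of the fixed-point map, since otherwise no factor $T^{\alpha}$ is gained for it and contraction fails. The paper sidesteps this by keeping $u$ (not $\bar{u}$) as the fluid unknown in the linearization \eqref{eq:FullLinearProblem-1}, so that this coupling is encoded in the inhomogeneous boundary condition $u|_{\Gamma}=\Omega\times y+\xi$ and never appears as a volumetric forcing; the nonlinear remainders $\mathcal{R}_{0},\mathcal{R}_{1},\mathcal{R}_{2}$ then contain no time derivatives. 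Second, the paper first subtracts the solution $U^{*}$ of the linear problem carrying the data $(U_{0},F)$ and runs the fixed point on the zero-initial-trace subspace $X_{2,2,0}^{T}\times H_{0}^{1}(0,T;\mathbb{R}^{6})$; this guarantees that the embedding constant into $BUC([0,T];H^{1}(\mathcal{F}))$ is independent of $T\leq T_{0}$, which is what makes the smallness-in-$T$ argument go through. With these two adjustments your outline coincides with the paper's proof, including the derivation of continuous dependence from the contraction estimate.
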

We prove this result almost exactly as in the {}``complement case''
of a rigid body immersed in a viscous liquid (filling a bounded or
exterior domain). Note that for our exact situation, a proof was given
already in \cite{SilvestreTakahashi2012}, however, we need to recall
some arguments in order to justify continuous dependence on the data
and Corollary \ref{cor:Corollary} below. The proof here uses maximal
regularity-type estimates of the linearized problem in $L^{2}(\mathcal{F})\times\mathbb{R}^{3}\times\mathbb{R}^{3}$
and the contraction mapping principle. A suitable linearization of
\eqref{eq:FFP transformiert} is exactly the same as for the complement
problem, except that here, we do not need to include an additional
boundary condition at $\partial\mathcal{S}$, i.e. it is given by
\begin{equation}
\left\{ \begin{array}{rcll}
u_{t}-\Delta u+\nabla p & = & f_{0}, & \text{in }(0,T)\times\mathcal{F},\\
\mathrm{div}\, u & = & 0, & \text{in }(0,T)\times\mathcal{F},\\
\bar{u} & = & 0, & \text{on }(0,T)\times\Gamma,\\
u(0) & = & u_{0} & \text{in }\mathcal{F},\\
\mathrm{m}_{B}\xi'+\int_{\Gamma}T(u,p)N\,\mathrm{d}\sigma & = & f_{1}, & t\in(0,T),\\
I_{B}\Omega'+\int_{\Gamma}y\times T(u,p)N\,\mathrm{d}\sigma & = & f_{2}, & t\in(0,T),\\
\xi(0)=\xi_{0} & \text{and} & \Omega(0)=\Omega_{0}.
\end{array}\right.\label{eq:FullLinearProblem-1}
\end{equation}
Thus, we cite the following result from \cite[Thm. 4.1]{GGH10a}.
\begin{prop}
\label{thm:linearResult-1}For all $(U_{0},F)\in\mathcal{W}\times\mathcal{V}^{T_{0}}$,
there is a unique solution 
\begin{eqnarray*}
u & \in & X_{2,2}^{T_{0}}\\
\nabla p & \in & L^{2}(0,T_{0};L^{2}(\mathcal{F})),\\
(\xi,\Omega) & \in & H^{1}(0,T_{0};\mathbb{R}^{6})
\end{eqnarray*}
to \eqref{eq:FullLinearProblem-1}, which satisfies 
\[
\left\Vert u\right\Vert _{X_{2,2}^{T_{0}}}+\left\Vert \nabla p\right\Vert _{L^{2}(0,T_{0};L^{2}(\mathcal{F}))}+\left\Vert \xi\right\Vert _{H^{1}(0,T_{0})}+\left\Vert \Omega\right\Vert _{H^{1}(0,T_{0})}\leq C_{MR}(\left\Vert F\right\Vert _{\mathcal{V}^{T_{0}}}+\left\Vert U_{0}\right\Vert _{\mathcal{W}}),
\]
where the constant $C_{MR}$ depends on geometry and material parameters
and on $T_{0}$.
\end{prop}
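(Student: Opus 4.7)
The plan is to reformulate \eqref{eq:FullLinearProblem-1} as an abstract Cauchy problem $U'+AU=F$ on a Hilbert space that is tailored to the coupled fluid--rigid body dynamics, to show that $-A$ generates an analytic contraction semigroup by self-adjointness and nonnegativity, and then to invoke Hilbert-space maximal regularity.

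First I would fix the natural energy space
\[
\mathcal{H} := \{U\in L^2(\mathcal{S};\mathbb{R}^3) : \mathrm{div}\,U = 0\text{ in }\mathcal{S},\ U|_{\mathcal{B}}=\xi+\Omega\times y\text{ for some }\xi,\Omega\in\mathbb{R}^3\},
\]
with the inner product
\[
(U,V)_\mathcal{H} := \int_\mathcal{F} u\cdot v\,\mathrm{d}y + \mathrm{m}_B\,\xi_U\cdot\xi_V + \Omega_U\cdot I_B\Omega_V,
\]
which reads off exactly the kinetic energy of the coupled system, and the form domain $\mathcal{V} := \{U\in\mathcal{H} : u\in H^1(\mathcal{F}),\ u|_\Gamma=\xi+\Omega\times y\}$, which is isomorphic to $\mathcal{W}$. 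On $\mathcal{V}\times\mathcal{V}$ I would introduce the symmetric bilinear form $\mathfrak{a}(U,V):=2\nu\int_\mathcal{F}D(u):D(v)\,\mathrm{d}y$, which is coercive by Korn's inequality on $\mathcal{F}$ (the rigid part of $V$ does not contribute to $D(v)$, which forces the estimate into the $H^1_0(\mathcal{F})$-component). The associated operator $A$ is then self-adjoint and nonnegative on $\mathcal{H}$.

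Next I would verify that $A$ really encodes the spatial part of \eqref{eq:FullLinearProblem-1}: testing the fluid equation against $V\in\mathcal{V}$ and integrating by parts, the boundary integral $\int_\Gamma T(u,p)N\cdot V\,\mathrm{d}\sigma$ combines exactly with the $\xi'$- and $\Omega'$-equations (because $V|_\mathcal{B}$ is a rigid motion and the pressure part vanishes by $\mathrm{div}\,v=0$), producing the weak formulation $(U',V)_\mathcal{H} + \mathfrak{a}(U,V) = (F,V)_\mathcal{H}$. Since $A$ is self-adjoint and nonnegative, it generates an analytic contraction semigroup on $\mathcal{H}$, so by de Simon's maximal $L^2$-regularity theorem on Hilbert spaces, every $(U_0,F)\in\mathcal{V}\times L^2(0,T_0;\mathcal{H})$ produces a unique
\[
U\in L^2(0,T_0;D(A))\cap H^1(0,T_0;\mathcal{H})
\]
with the bound $\Vert U\Vert_{L^2(D(A))}+\Vert U'\Vert_{L^2(\mathcal{H})}\leq C(T_0)(\Vert F\Vert_{L^2(\mathcal{H})}+\Vert U_0\Vert_\mathcal{V})$. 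Uniqueness is immediate from $A\geq 0$. The pressure $p$ is then recovered at almost every $t$ by a de Rham argument applied to the momentum equation, and $\Vert\nabla p\Vert_{L^2(0,T_0;L^2(\mathcal{F}))}$ is controlled by $\Vert u'\Vert$, $\Vert\Delta u\Vert$ and $\Vert f_0\Vert$.

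The main obstacle is the identification $D(A)\simeq\{U\in\mathcal{V}:u\in H^2(\mathcal{F})\}$: self-adjointness and the abstract estimate are straightforward, but showing full $H^2$-regularity up to $\Gamma$ of every element of $D(A)$ requires a Stokes regularity estimate on $\mathcal{F}$ with prescribed rigid-motion Dirichlet data on $\Gamma\in C^{2,1}$, together with a divergence-preserving extension of that trace into $\mathcal{F}$ that keeps the $H^2$-bound. Once this elliptic regularity is in place, the graph norm of $A$ is equivalent to $\Vert u\Vert_{H^2(\mathcal{F})}+\vert\xi\vert+\vert\Omega\vert$, which unpacks to the claimed regularity class $X_{2,2}^{T_0}\times H^1(0,T_0;\mathbb{R}^6)$ with the stated constant $C_{MR}$; its dependence on $T_0$ comes from absorbing the initial datum via the standard lifting in the de Simon estimate.
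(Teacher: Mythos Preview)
The paper does not prove this proposition: it is quoted from \cite[Thm.~4.1]{GGH10a} for the complement problem (rigid body immersed in a fluid), the author noting just before the statement that the linearisation here is the same except for the absent outer boundary condition. Your proposal therefore supplies a proof where the paper only cites one, and your strategy --- reformulate the coupled system as $U'+AU=F$ on the kinetic-energy Hilbert space, identify $A$ as self-adjoint and nonnegative via the symmetric form $\mathfrak a(U,V)=2\nu\int_{\mathcal F}D(u):D(v)\,\mathrm{d}y$, invoke de Simon's maximal $L^2$-regularity on Hilbert spaces, and characterise $D(A)$ through Stokes $H^2$-regularity --- is the standard route and is essentially what is carried out in the cited reference.

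One terminological slip worth flagging: $\mathfrak a$ is \emph{not} coercive on your $\mathcal V$, since it vanishes on global rigid motions $u\equiv\xi+\Omega\times y$ in $\mathcal F$. Korn's inequality gives you only closedness of the form, i.e.\ completeness of $\mathcal V$ in the form norm $\Vert\cdot\Vert_{\mathcal H}^2+\mathfrak a(\cdot,\cdot)$; but that is all you need, because a closed symmetric nonnegative form yields a self-adjoint $A\geq 0$, hence an analytic contraction semigroup, and de Simon applies. The rest of your outline --- the integration-by-parts identification of $A$ with the coupled system via the boundary term $\int_\Gamma T(u,p)N\cdot V\,\mathrm d\sigma$, pressure recovery by de Rham, and the $D(A)\simeq H^2$ identification via the stationary Stokes estimate with rigid Dirichlet data on $\Gamma\in C^{2,1}$ --- is correct and matches how this class of results is proved.
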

In order to prove Theorem \ref{thm:mainResultN-1}, we rewrite \eqref{eq:FFP transformiert}
as a fixed point equation in $U$. We denote by $\ru,\rp,\rxi,\rom$
the unique solution of \eqref{eq:FullLinearProblem-1}. Let $\lu=u-\ru$,$\lp=p-\rp$,
$\lxi=\xi-\rxi$, $\lom=\Omega-\rom$ and $\hat{\bar{u}}=\bar{u}-\bar{u}^{*}$.
Then \eqref{eq:FFP transformiert} is equivalent to 
\begin{equation}
\left\{ \begin{array}{rcll}
\lu_{t}-\Delta\lu+\nabla\lp & = & \mathcal{R}_{0}(\lu,\lxi,\lom), & \text{in }(0,T)\times\mathcal{F},\\
\mathrm{div}\,\lu & = & 0, & \text{in }(0,T)\times\mathcal{F},\\
\hat{\bar{u}} & = & 0, & \text{on }(0,T)\times\Gamma,\\
\lu(0) & = & 0, & \text{in }\mathcal{F},\\
\mathrm{m}_{B}(\lxi)'+\int_{\Gamma}T(\lu,\lp)N\,\mathrm{d}\sigma & = & \mathcal{R}_{1}(\lxi,\lom), & \text{in }(0,T),\\
I_{B}(\lom)'+\int_{\Gamma}y\times T(\lu,\lp)N\,\mathrm{d}\sigma & = & \mathcal{R}_{2}(\lom), & \text{in }(0,T),\\
\lxi(0)=0 & \text{and} & \lom(0)=0,
\end{array}\right.\label{eq:FPAFormulierungN}
\end{equation}
where 
\begin{eqnarray*}
\mathcal{R}_{0}(\lu,\lxi,\lom) & = & (\lom+\rom)\times(\lu+\ru)-((\hat{\bar{u}}+\bar{u}^{*})\cdot\nabla)(\lu+\ru),\\
\mathcal{R}_{1}(\lxi,\lom) & = & \mathrm{m}(\lxi+\rxi)\times(\lom+\rom),\\
\mathcal{R}_{2}(\lom) & = & (\lom+\rom)\times I(\lom+\rom).
\end{eqnarray*}
Given a fixed $T_{0}$, we define
\[
C_{*}:=\left\Vert \rU\right\Vert _{\mathcal{X}^{T_{0}}}=C_{MR}(\left\Vert U_{0}\right\Vert _{\mathcal{W}}+\Vert f\Vert_{\mathcal{V}^{T_{0}}}).
\]
The solution should satisfy, for some $T\leq T_{0}$, 
\begin{eqnarray*}
\lu & \in & X_{2,2,0}^{T}:=\{\lu\in X_{2,2}^{T}:\lu|_{t=0}=0\},\\
\nabla\lp & \in & L^{2}(0,T;L^{2}(\mathcal{F}))\text{ and}\\
\lxi,\lom & \in & H_{0}^{1}(0,T):=\{\hat{r}\in H^{1}(0,T):\hat{r}|_{t=0}=0\},
\end{eqnarray*}
so we choose the ball $\mathcal{X}_{R}^{T}$ as 
\[
\mathcal{X}_{R}^{T}:=\{U\in X_{2,2,0}^{T}\times H_{0}^{1}(0,T;\mathbb{R}^{3})\times H_{0}^{1}(0,T;\mathbb{R}^{3}):\left\Vert U\right\Vert _{\mathcal{X}^{T}}\leq R\}.
\]
In \eqref{eq:sinnlos1} below, we see that $R:=C_{*}$ is optimal.
Let

\[
\phi_{R}^{T}:\fU\mapsto\left(\begin{array}{c}
\mathcal{R}_{0}(\fu,\fxi,\fom)\\
\mathcal{R}_{1}(\fxi,\fom)\\
\mathcal{R}_{2}(\fom)
\end{array}\right)\mapsto U
\]
be the function which maps $\fU\in\mathcal{X}_{R}^{T}$ to the solution
$U$ of the linear problem \eqref{eq:FullLinearProblem-1} with right
hand sides $\mathcal{R}_{0}(\fu,\fxi,\fom),\mathcal{R}_{1}(\fxi,\fom),\mathcal{R}_{2}(\fom)$
and initial value $U_{0}=0$. 

In the following, $C>0$ denotes a generic constant which may depend
on $T_{0}$, but can be chosen independently of $T,R$ for $0<T\leq T_{0}$.
Assume $\fU,\fU_{1},\fU_{2}\in\mathcal{X}_{R}^{T}$ and set $U=\fU+\rU$,
$U_{1}=\fU_{1}+\rU_{1}$ etc.

We use the estimates
\[
\left\Vert U\right\Vert _{\mathcal{X}^{T}}\leq R+C_{*}\;\text{and }\;\Vert\bar{u}\Vert_{X_{2,2}^{T}}\leq C\Vert U\Vert_{\mathcal{X}^{T}}
\]
and that $u\in X_{2,2}^{T}$ satisfies 
\begin{equation}
u\in BUC([0,T];H^{1}(\mathcal{F})),\label{eq:BUCeinbettung}
\end{equation}
which follows from \cite[III.4.10]{Amann95} and by the construction
of $u$ in \cite[Section 4]{GGH10a}. Here, $BUC([0,T)$ denotes the
space of bounded uniformly continuous functions on $[0,T]$ and we
note that the embedding constant does not depend on $T\leq T_{0}$
if it is restricted to the subspace $X_{2,2,0}^{T}$ of $X_{2,2}^{T}$.
It follows that
\begin{equation}
\Vert\nabla u\Vert_{2,\infty}+\Vert\nabla\bar{u}\Vert_{2,\infty}\leq C(R+C_{*}).\label{eq:RCstar}
\end{equation}
The function $\phi_{R}^{T}$ maps $\mathcal{X}_{R}^{T}$ into itself,
if it is shown that 
\begin{eqnarray*}
\Ve\phi_{R}^{T}(\fU)\Ve_{\mathcal{X}^{T}} & \leq & C_{MR}\bigl(\Ve\mathcal{R}_{0}(\fu,\fxi,\fom),\mathcal{R}_{1}(\fxi,\fom),\mathcal{R}_{2}(\fom)\Ve_{\mathcal{V}^{T}}\bigr)\leq R.
\end{eqnarray*}
The term $((\hat{\bar{u}}+\bar{u}^{*})\cdot\nabla)(\lu+\ru)$ in this
estimate is treated as usual for the Navier-Stokes problem, i.e. note
that $\bar{u}\in X_{2,2}^{T}$ if 
\[
U\simeq(u,\Omega,\xi)\in X_{2,2}^{T}\times H^{1}(0,T)\times H^{1}(0,T):=\mathcal{X}^{T}
\]
and that in addition, $\bar{u}=0$ on $\Gamma$. Given $u,\bar{u}\in X_{2,2}^{T}$
such that $\bar{u}\in H_{0}^{1}(\mathcal{F})$, we have by Hölder's
inequality, 
\[
\Vert(\bar{u}\cdot\nabla)u\Vert_{2,2}\leq\Vert\bar{u}\Vert_{2,\infty}\Vert\nabla u\Vert_{\infty,2}.
\]
Since $\mathcal{F}$ is bouded, we have $\Vert\bar{u}(t)\Vert_{\infty}\leq C(1+C_{p})\Vert\nabla\bar{u}\Vert_{3+\varepsilon}$
for every $\varepsilon>0$. The Riesz-Thorin Theorem yields $\Vert\nabla\bar{u}\Vert_{3+\varepsilon}\leq2\Vert\nabla\bar{u}\Vert_{2}^{\theta}\Vert\nabla\bar{u}\Vert_{6}^{(1-\theta)}$
where $\theta=\frac{1}{2}\frac{3-\varepsilon}{3+\varepsilon}$. In
conclusion,
\begin{eqnarray}
\Vert(\bar{u}\cdot\nabla)u\Vert_{2,2} & \leq & \Vert\bar{u}\Vert_{2,\infty}\Vert\nabla u\Vert_{\infty,2}\nonumber \\
 & \leq & C(\int_{0}^{T}\Vert\nabla\bar{u}(t)\Vert_{2}^{2\theta}\Vert\nabla\bar{u}(t)\Vert_{6}^{2(1-\theta)}\,\mathrm{d}t)^{1/2}\Vert\nabla u\Vert_{\infty,2}\nonumber \\
 & \leq & C\Vert\nabla u\Vert_{\infty,2}\Vert\nabla\bar{u}\Vert_{\infty,2}^{\theta}\Vert\nabla\bar{u}\Vert_{2(1-\theta),6}^{(1-\theta)}\nonumber \\
 & \leq & CT^{\theta/2}\Vert\nabla u\Vert_{\infty,2}\Vert\nabla\bar{u}\Vert_{\infty,2}^{\theta}\Vert D^{2}\bar{u}\Vert_{2,2}^{(1-\theta)}.\label{eq:VNablauEst}
\end{eqnarray}
In addition (not optimally), 
\[
\Vert\Omega\times u\Vert_{2,2}\leq\Vert\Omega\Vert_{2,\infty}\Vert u\Vert_{\infty,2}\leq CT^{1/2}(R+C_{*})^{2},
\]
so that by \eqref{eq:VNablauEst}, \eqref{eq:BUCeinbettung} and \eqref{eq:RCstar},
we obtain 
\begin{eqnarray*}
\Ve\mathcal{R}_{0}(\fu,\fxi,\fom)\Ve_{L^{2}(0,T;L^{2}(\mathcal{F}))} & \leq & C(T^{1/2}+T^{\theta/2})(R+C_{*})^{2},
\end{eqnarray*}
and (again not optimally),
\begin{eqnarray*}
\Ve\mathcal{R}_{1}(\fxi,\fom)\Ve_{L^{2}(0,T)}+\Ve\mathcal{R}_{2}(\fom)\Ve_{L^{2}(0,T)} & \leq & CT^{1/2}(R+C_{*})^{2}.
\end{eqnarray*}
Thus, $\phi_{R}^{T}$ maps into $\mathcal{X}_{R}^{T}$ if 
\begin{equation}
T^{1/2}+T^{\theta/2}\leq\frac{R}{CC_{MR}(R+C_{*})^{2}}.\label{eq:sinnlos1}
\end{equation}
A simple argument shows that $R=C_{*}$ maximizes $T$ with $T(C_{*})^{1/2}+T(C)_{*}^{\theta/2}=\frac{1}{4CC_{MR}C_{*}}$.
Via the estimate
\begin{eqnarray*}
\Ve\phi_{R}^{T}(\fU_{1})-\phi_{R}^{T}(\fU_{2})\Ve_{\mathcal{X}^{T}} & \leq & CC_{*}(T^{1/2}+T^{\theta/2})\Ve\fU_{1}-\fU_{2}\Ve_{\mathcal{X}^{T}},
\end{eqnarray*}
the map $\phi_{C_{*}}^{T}$ is contractive for small $T$. The contraction
mapping theorem yields a unique fixed point $\lU$ of $\phi_{C_{*}}^{T}$
which gives a corresponding pressure $\lp$ and a strong solution
$(\lu,\lp,\lxi,\lom)$ of problem \eqref{eq:FPAFormulierungN}. At
the same time, we can deduce continuous dependence of the solution
on the data in the following sense. Given $U_{0},V_{0}\in\mathcal{W}$
and $F,G\in\mathcal{V}^{T_{0}}$, there are solutions $U^{*},V^{*}\in\mathcal{X}^{T_{0}}$
of the linear problem \eqref{eq:FullLinearProblem-1} and solutions
$U\in\mathcal{X}^{T(C_{*}(U_{0},f))}$, $V\in\mathcal{X}^{T(C_{*}(V_{0},g))}$
of \eqref{eq:FFP transformiert}. Their difference can be estimated
by 
\begin{eqnarray*}
\Vert U-V\Vert_{\mathcal{X}^{T}} & \leq & C_{MR}\Vert\mathcal{R}(U+U^{*})-\mathcal{R}(V+V^{*})\Vert_{2,2\times L^{2}(0,T)\times L^{2}(0,T)}\\
 & \leq & CC_{MR}\max(C_{*})(T^{1/2}+T^{\theta/2})(\Vert U-V\Vert_{\mathcal{X}^{T}}+\Vert U^{*}-V^{*}\Vert_{\mathcal{X}^{T}}),
\end{eqnarray*}
where $T=\min(T(C_{*}))$. Since $CC_{MR}\max(C_{*})(T^{1/2}+T^{\theta/2})<1$,
and $U^{*}$ and $V^{*}$ depend linearly on the data, we obtain 
\[
\Vert U-V\Vert_{\mathcal{X}^{T}}\leq C(T,\max(C_{*}))\left(\Vert U_{0}-V_{0}\Vert_{\mathcal{W}}+\Vert F-G\Vert_{\mathcal{V}^{T_{0}}}\right).
\]
The maximal time $T_{max}$ of existence for these solutions is characterized
as follows. Either $T_{max}=\infty$ or one of the functions 
\begin{equation}
t\mapsto\Vert u(t)\Vert_{H^{1}},\quad t\mapsto\vert\Omega(t)\vert,\quad t\mapsto\vert\xi(t)\vert\label{eq:BlowUp}
\end{equation}
blows up as $t\to T_{max}$, because otherwise, the solution could
be extended. In \eqref{eq:trueBlowUp} below, we show that this condition
is equivalent to the blow-up of $t\mapsto\Vert\bar{u}(t)\Vert_{H}.$
Note that continuous dependence on the data extends to $T_{max}$.
This proves Theorem \ref{thm:mainResultN-1}.

\section{\label{sec:Global-in-time-existence}Global-in-time existence of
weak solutions}

From here, we assume that there are no external forces or torques
driving the system, i.e. $f_{0},f_{1},f_{2}=0$. We cite from \cite[Def. 5.5, Thm. 5.6]{SilvestreTakahashi2012}
the global existence of weak solutions for \eqref{eq:FFP transformiert}.
Let us first introduce some notation. We write 
\begin{eqnarray*}
\mathcal{L} & := & \{U\in L^{2}(\mathcal{S}):\mathrm{div}\, U=0\text{ in }\mathcal{S},D(U)=0\text{ in }\mathcal{B}\}\\
\mathcal{H} & := & \{U\in H^{1}(\mathcal{S}):\mathrm{div}\, U=0\text{ in }\mathcal{S},D(U)=0\text{ in }\mathcal{B}\}.
\end{eqnarray*}
By $\langle\cdot,\cdot\rangle_{\mathcal{H}}$, we denote the duality
product between $\mathcal{H}$ and $\mathcal{H}'$. For a detailed
discussion and characterization of these spaces, we refer to \cite[Sections 3,4]{SilvestreTakahashi2012}.
Here, we note that each $U\in\mathcal{L}$ can be characterized as
\begin{equation}
U|_{\mathcal{B}}(y)=\Omega_{U}\times y+\xi_{U}\label{eq:Ubreakup}
\end{equation}
 for some $\Omega_{U},\xi_{U}\in\mathbb{R}^{3}$, so that the identifications
in \eqref{eq:Identifications} apply. Moreover, we note that $L^{2}(\mathcal{S})$
is endowed with the measure $\rho\,\mathrm{d}y$, where 
\[
\rho(y)=\begin{cases}
\begin{array}{ll}
\rho_{B}(y), & y\in\mathcal{B},\\
1, & y\in\mathcal{F}.
\end{array}\end{cases}
\]
The symmetric continuous bilinear form $a:\mathcal{H}\times\mathcal{H}\to\mathbb{R}$
is given by
\[
a(U,W):=2\nu\int_{\mathcal{F}}D(U):D(W)\,\mathrm{d}y
\]
and the trilinear form $b:\mathcal{H}\times\mathcal{H}\times\mathcal{H}\to\mathbb{R}$
is given by 
\[
b(U,V,W):=m_{B}(\Omega_{V}\times\xi_{U})\cdot\xi_{W}+\Omega_{V}\times I_{B}\Omega_{U}\cdot\Omega_{W}+\int_{\mathcal{F}}(\bar{v}\cdot\nabla u)\cdot w\,\mathrm{d}y+\int_{\mathcal{F}}\Omega_{V}\times u\cdot w\,\mathrm{d}y.
\]

\begin{thm}
\label{thm:ExWeak}Let $U_{0}\in\mathcal{L}$. Then there exists a
$U\in L^{\infty}(0,\infty;\mathcal{L})\cap L_{loc}^{2}([0,\infty);\mathcal{H})\cap C_{w}([0,\infty);\mathcal{L})$
with $U'\in L^{4/3}([0,\infty);\mathcal{H}')$ such that for all $\phi\in\mathcal{H}$,
\begin{equation}
\langle U',\phi\rangle+a(U,\phi)+b(U,U,\phi)=0\quad\text{a.e. in }(0,\infty),\label{eq:WeakFormula}
\end{equation}
and $U(0)=U_{0}$ is attained in the weak sense. In particular, for
all $t>0$, $U$ satisfies the energy inequality
\begin{equation}
\Vert U(t)\Vert_{L^{2}(\mathcal{S})}^{2}+4\nu\int_{0}^{t}\Vert D(u)(s)\Vert_{2}^{2}\,\mathrm{d}s\leq\Vert U_{0}\Vert_{L^{2}(\mathcal{S})}^{2}.\label{eq:EI}
\end{equation}

\end{thm}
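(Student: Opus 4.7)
The plan is a Galerkin approximation in the separable Hilbert space $\mathcal{H}$. Since the rigid constraint $D(U)=0$ on $\mathcal{B}$ reduces $U|_{\mathcal{B}}$ to an affine field parameterised by $(\xi_U,\Omega_U)\in\mathbb{R}^{6}$ and the $H^{1}$-control on $\mathcal{F}$ is standard, the embedding $\mathcal{H}\hookrightarrow\mathcal{L}$ is compact, so the eigenfunctions of $a(w,\phi)=\lambda(w,\phi)_{\mathcal{L}}$ yield an orthonormal basis $\{w_{i}\}$ of $\mathcal{L}$ lying in $\mathcal{H}$. Setting $V_{n}:=\mathrm{span}(w_{1},\dots,w_{n})$, $U_{n}(t)=\sum_{i\leq n}c_{i}^{n}(t)w_{i}$, and $U_{n}(0)$ the $\mathcal{L}$-projection of $U_{0}$, imposing \eqref{eq:WeakFormula} for $\phi\in V_{n}$ produces a smooth quadratic ODE for $c^{n}$. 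Testing with $\phi=U_{n}$ gives the identity
\[
\frac{1}{2}\frac{\mathrm{d}}{\mathrm{d}t}\Vert U_{n}(t)\Vert_{L^{2}(\mathcal{S})}^{2}+2\nu\Vert D(u_{n})(t)\Vert_{2}^{2}=0,
\]
because all four summands of $b(U_{n},U_{n},U_{n})$ vanish: the two rigid-body brackets are scalar triple products with a repeated factor, $\int_{\mathcal{F}}\Omega_{U_{n}}\times u_{n}\cdot u_{n}=0$ pointwise, and $\int_{\mathcal{F}}(\bar{u}_{n}\cdot\nabla u_{n})\cdot u_{n}=-\frac{1}{2}\int_{\mathcal{F}}(\mathrm{div}\,\bar{u}_{n})|u_{n}|^{2}=0$ using $\bar{u}_{n}|_{\Gamma}=0$ and $\mathrm{div}\,\bar{u}_{n}=0$. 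This gives global existence of $U_{n}$ and a uniform bound in $L^{\infty}(0,\infty;\mathcal{L})\cap L^{2}_{loc}([0,\infty);\mathcal{H})$ via Korn's inequality on $\mathcal{F}$.

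Next I would obtain compactness by bounding $U_{n}'$ in $L^{4/3}_{loc}([0,\infty);\mathcal{H}')$. For $\phi\in\mathcal{H}$ with $\Vert\phi\Vert_{\mathcal{H}}\leq 1$, H\"older's inequality, the Sobolev embedding $H^{1}(\mathcal{F})\hookrightarrow L^{6}(\mathcal{F})$, and the 3D interpolation $\Vert U\Vert_{L^{3}(\mathcal{F})}\leq C\Vert U\Vert_{L^{2}}^{1/2}\Vert U\Vert_{\mathcal{H}}^{1/2}$ give
\[
|b(U_{n},U_{n},\phi)|\leq C\Vert U_{n}\Vert_{L^{2}(\mathcal{S})}^{1/2}\Vert U_{n}\Vert_{\mathcal{H}}^{3/2}\Vert\phi\Vert_{\mathcal{H}},
\]
together with the obvious bound $|a(U_{n},\phi)|\leq C\Vert U_{n}\Vert_{\mathcal{H}}\Vert\phi\Vert_{\mathcal{H}}$. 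Integrating the $4/3$-power in time and using the uniform bounds on $\Vert U_{n}\Vert_{\mathcal{L}}$ and $\Vert U_{n}\Vert_{L^{2}(0,T;\mathcal{H})}$ yields the required $L^{4/3}$-estimate for $U_{n}'$. Aubin--Lions applied to $\mathcal{H}\hookrightarrow\mathcal{L}\hookrightarrow\mathcal{H}'$ then gives, along a subsequence, strong convergence $U_{n}\to U$ in $L^{2}_{loc}([0,\infty);\mathcal{L})$, together with weak $L^{2}_{loc}(\mathcal{H})$, weak-$*$ $L^{\infty}(\mathcal{L})$, and weak $L^{4/3}(\mathcal{H}')$ convergence of derivatives, whence $U\in C_{w}([0,\infty);\mathcal{L})$.

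The delicate step is passage to the limit in $b(U_{n},U_{n},w_{j})$ for each fixed basis element $w_{j}$. The two rigid-body terms pass immediately because $(\xi_{U_{n}},\Omega_{U_{n}})$, which determine $U_{n}|_{\mathcal{B}}$, converge strongly in $L^{2}_{t}$ via the rigid reduction; the Coriolis term $\int_{\mathcal{F}}\Omega_{U_{n}}\times u_{n}\cdot w_{j}$ combines strong $L^{2}_{t}$-convergence of $\Omega_{U_{n}}$ with strong $L^{2}$-convergence of $u_{n}$; and for the convective term I would integrate by parts,
\[
\int_{\mathcal{F}}(\bar{u}_{n}\cdot\nabla u_{n})\cdot w_{j}\,\mathrm{d}y=-\int_{\mathcal{F}}(\bar{u}_{n}\cdot\nabla w_{j})\cdot u_{n}\,\mathrm{d}y,
\]
after which the strong $L^{2}$-convergence of $\bar{u}_{n}$ and $u_{n}$, together with the fixed smoothness of $\nabla w_{j}$, yields the limit pointwise a.e. in $t$ with dominated convergence supplying the time integration; this is the main obstacle since only the integration by parts circumvents the need for strong compactness on $\nabla u_{n}$. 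Density of $\bigcup_{j}V_{j}$ in $\mathcal{H}$ extends \eqref{eq:WeakFormula} to arbitrary $\phi\in\mathcal{H}$. The energy inequality \eqref{eq:EI} follows from the Galerkin equality by weak-$L^{2}$ lower semicontinuity of $\Vert U(t)\Vert_{L^{2}(\mathcal{S})}^{2}$ and $\int_{0}^{t}\Vert D(u)(s)\Vert_{2}^{2}\,\mathrm{d}s$, combined with the strong convergence $U_{n}(0)\to U_{0}$ in $\mathcal{L}$; weak attainment of the initial datum is then immediate from $U\in C_{w}([0,\infty);\mathcal{L})$.
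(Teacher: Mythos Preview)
Your proposal is correct and follows essentially the same Galerkin scheme that the paper attributes to \cite{SilvestreTakahashi2012}; the paper does not reprove this theorem but cites it. In particular, the paper explicitly records in Section~\ref{sec:Properties-of-WS} that the approximants $U_{k}$ satisfy the energy \emph{equality} and converge strongly in $L^{2}_{loc}(0,\infty;\mathcal{L})$, which is precisely the compactness you obtain via Aubin--Lions and use, after the integration by parts, to pass to the limit in the convective part of $b$.
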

Note that by a direct calculation and the identifications in \eqref{eq:Identifications},
every strong solution $u,\xi,\Omega$ given by Theorem \ref{thm:mainResultN-1}
provides a weak solution $U$ on the interval $(0,T_{max})$.

\section{\label{sec:Conservation-of-Momenta}Conservation of Momenta }

Let $U_{0}\in\mathcal{L}$ and $U$ be a weak solution given by Theorem
\ref{thm:ExWeak}. We define 
\[
L(t):=\mathrm{m}\xi(t)+\mathrm{m}_{F}\Omega(t)\times y_{F}
\]
to be the \emph{total linear momentum }of the system, where the second
term is due to the fact that $\Omega$ is calculated with respect
to the center of mass $y_{B}$ of the rigid body and not with respect
to the center of mass of the full structre, $y_{c}$. We denote the
\emph{total angular momentum }of the system by 
\begin{equation}
A(t):=\int_{\mathcal{F}}y\times\bar{u}(t,y)\,\mathrm{d}y+I\Omega(t).\label{eq:defofA}
\end{equation}

\begin{lem}
\label{lem:ConsAngularMomentum}Let $U$ be a weak solution given
by Theorem \ref{thm:ExWeak}. Then
\begin{equation}
L'(t)+\Omega(t)\times L(t)=0,\quad L(0)=\mathrm{m}\xi_{0}+\mathrm{m}_{F}\Omega_{0}\times y_{F}=:L_{0},\label{eq:L}
\end{equation}
and
\begin{equation}
A'(t)+\Omega(t)\times A(t)=0,\quad A(0)=\int_{\mathcal{F}}y\times\bar{u}_{0}(y)\,\mathrm{d}y+I\Omega_{0}=:A_{0}.\label{eq:Aeq}
\end{equation}
In particular, for all $t\geq0$, 
\begin{equation}
\frac{\mathrm{d}}{\mathrm{dt}}\vert A(t)\vert^{2}=\frac{\mathrm{d}}{\mathrm{d}t}\vert L(t)\vert^{2}=0,\qquad\vert A(t)\vert=\vert A_{0}\vert,\vert L(t)\vert=\vert L_{0}\vert.\label{eq:conservedModulus}
\end{equation}
\end{lem}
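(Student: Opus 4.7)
The plan is to derive both conservation identities from the weak formulation \eqref{eq:WeakFormula} by testing against elements $\phi \in \mathcal{H}$ that are rigid motions of the \emph{whole} structure $\mathcal{S}$ (not merely of $\mathcal{B}$): any such $\phi$ satisfies $D(\phi) \equiv 0$, hence annihilates the viscous form $a(U,\phi)$, and the content of \eqref{eq:WeakFormula} is then read off from $b(U,U,\phi)$ and the duality pairing $\langle U',\phi\rangle$ alone. The modulus identities \eqref{eq:conservedModulus} will follow immediately from the resulting ODEs by scalar multiplication with $A$ and $L$, since the triple product forces $A \cdot (\Omega \times A) = L \cdot (\Omega \times L) = 0$.

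For \eqref{eq:L} I would test with the constant field $\phi \equiv a$, $a \in \mathbb{R}^3$. The convective contribution $\int_\mathcal{F}(\bar u\cdot\nabla u)\cdot a\,\mathrm{d}y$ vanishes after integration by parts thanks to $\mathrm{div}\,\bar u = 0$ and $\bar u|_\Gamma = 0$, so $b(U,U,\phi)$ reduces to $a\cdot[m_B\Omega\times\xi + \Omega\times\int_\mathcal{F}u\,\mathrm{d}y]$; and, using $\int_\mathcal{B}y\rho_B\,\mathrm{d}y=0$, the duality pairing becomes $a\cdot \tfrac{\mathrm d}{\mathrm dt}(m_B\xi + \int_\mathcal{F}u\,\mathrm{d}y)$. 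Substituting $u = \bar u + \xi + \Omega\times y$, using $\int_\mathcal{F}y\,\mathrm{d}y = m_F y_F$, and exploiting the arbitrariness of $a$ delivers \eqref{eq:L}. For \eqref{eq:Aeq} I would test with $\phi(y) = a\times(y-y_c)$. Now the convective term in $b$ yields $a\cdot\int_\mathcal{F}\bar u\times u\,\mathrm{d}y$ after integration by parts, the Coriolis-like term produces $a\cdot\int_\mathcal{F}(y-y_c)\times(\Omega\times u)\,\mathrm{d}y$, and the rigid-body summands contribute $a\cdot[\Omega\times I_B\Omega - m_B y_c\times(\Omega\times\xi)]$. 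In the duality pairing, the body integral simplifies, via the parallel-axis relation $I_B^{y_c} = I_B + m_B(|y_c|^2\mathrm{Id} - y_c y_c^\top)$ together with $\int_\mathcal{B}(y-y_c)\rho_B\,\mathrm{d}y = -m_B y_c$, to $a\cdot[I_B\Omega' - m_B y_c\times\xi']$, while the fluid integral equals $a\cdot\tfrac{\mathrm d}{\mathrm dt}\int_\mathcal{F}(y-y_c)\times u\,\mathrm{d}y$. Decomposing $u = \bar u + \xi + \Omega\times y$, invoking \eqref{eq:ItoIBIF} and the relation $m y_c = m_F y_F$, and comparing with \eqref{eq:defofA} reassembles everything into \eqref{eq:Aeq}.

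The main obstacle I anticipate lies in the angular-momentum case: a substantial number of scalar and bilinear terms in $\xi$, $\Omega$, $\bar u$ and in the inertia tensors $I_B$, $I_F$, $I$ have to be collected and collapsed to the two-summand form of $A$ and $\Omega\times A$; making this collapse work cleanly hinges on disciplined use of \eqref{eq:ItoIBIF} and $m y_c = m_F y_F$, together with a careful separation of the $y_c$-dependent and $y_c$-independent contributions. A minor but necessary technical step is that the weak formulation provides only $U' \in L^{4/3}((0,\infty);\mathcal{H}')$, so the identities above first hold in a distributional sense; integrating against a smooth time cutoff and invoking the weak continuity $U \in C_w([0,\infty);\mathcal{L})$ then promotes them to the pointwise ODEs \eqref{eq:L}--\eqref{eq:Aeq}, from which \eqref{eq:conservedModulus} is a direct consequence.
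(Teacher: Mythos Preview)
Your proposal is correct and follows the same overall strategy as the paper: test the weak formulation against global rigid motions so that $a(U,\phi)$ drops out, read off ODEs for $L$ and $A$, and deduce \eqref{eq:conservedModulus} by scalar multiplication.

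The one genuine difference is the choice of \emph{center} for the angular test function. The paper tests with $\phi(y)=e_j\times y$, i.e.\ centered at the body's center of mass $y_B=0$, so that $\xi_\phi=0$ and the trilinear form stays simple. This yields the ``raw'' identity
\[
\frac{\mathrm d}{\mathrm dt}\Bigl(I_B\Omega+\int_{\mathcal F}y\times u\,\mathrm dy\Bigr)
=-\Omega\times\Bigl(I_B\Omega+\int_{\mathcal F}y\times u\,\mathrm dy\Bigr)-\xi\times\int_{\mathcal F}u\,\mathrm dy,
\]
and the passage to $A'+\Omega\times A=0$ is then a single algebraic step using $I_B\Omega+\int_{\mathcal F}y\times u\,\mathrm dy=A+\tfrac{m_F}{m}y_F\times L$ together with the already-established equation for $L$. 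You instead center the test function at $y_c$, which aims at $A$ directly but forces you to carry the nonzero $\xi_\phi=-a\times y_c$ through $b(U,U,\phi)$ and to invoke the parallel-axis relation for $I_B$. Both routes use the same ingredients (\eqref{eq:ItoIBIF}, $my_c=m_Fy_F$, $\int_{\mathcal F}\bar u=0$); the paper's choice keeps the testing step cleaner and concentrates all the $y_c$-bookkeeping into one identity at the end, whereas yours distributes that bookkeeping across the computation of $b$ and $\langle U',\phi\rangle$.
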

\begin{proof}
In the weak formulation \eqref{eq:WeakFormula}, for $j\in\{1,2,3\}$,
we take the $j$-th unit vector $e_{j}$ as well as the functions
$y\times e_{j}$ as test functions $\phi$. In \cite[p. 18]{SilvestreTakahashi2012},
it is shown that, by integration by parts and an approximation argument,
it follows that 
\[
\frac{\mathrm{d}}{\mathrm{d}t}(\mathrm{m}_{B}\xi(t)+\int_{\mathcal{F}}u(t)\,\mathrm{d}y)=-\Omega(t)\times(\mathrm{m}_{B}\xi(t)+\int_{\mathcal{F}}u(t)\,\mathrm{d}y)
\]
 and 
\begin{equation}
\frac{\mathrm{d}}{\mathrm{d}t}\left(I_{B}\Omega(t)+\int_{\mathcal{F}}y\times u(t,y)\,\mathrm{d}y\right)=-\Omega(t)\times\left(I_{B}\Omega(t)+\int_{\mathcal{F}}y\times u(t,y)\,\mathrm{d}y\right)-\xi(t)\times\int_{\mathcal{F}}u(t)\,\mathrm{d}y.\label{eq:AeqZwischenschritt}
\end{equation}
Since $\bar{u}(t)\in L_{\sigma}^{2}(\mathcal{F})$ for every weak
solution, $\int_{\mathcal{F}}\bar{u}(t)\,\mathrm{d}y=0$ and \eqref{eq:L}
follows directly. By \eqref{eq:ItoIBIF}, 
\[
I_{B}\Omega(t)+\int_{\mathcal{F}}y\times u(t,y)\,\mathrm{d}y=A(t)+\frac{\mathrm{m}_{F}}{\mathrm{m}}y_{F}\times L(t)
\]
and by \eqref{eq:aTimesb}, 
\[
\Omega\times(A+\frac{\mathrm{m}_{F}}{\mathrm{m}}y_{F}\times L)=\Omega\times A+\frac{\mathrm{m}_{F}}{\mathrm{m}}y_{F}\times(\Omega\times L)-\frac{\mathrm{m}_{F}}{\mathrm{m}}L\times(\Omega\times y_{F}),
\]
so that by \eqref{eq:AeqZwischenschritt} and \eqref{eq:L}, 
\[
\frac{\mathrm{d}}{\mathrm{d}t}A(t)=-\Omega(t)\times A(t)+\mathrm{m}_{F}\xi\times(\Omega\times y_{F})-\xi(t)\times\int_{\mathcal{F}}u(t)\,\mathrm{d}y.
\]
Again since $\bar{u}(t)\in L_{\sigma}^{2}(\mathcal{F})$, $\xi\times\int_{\mathcal{F}}u\,\mathrm{d}y=\xi\times(\Omega\times\mathrm{m}_{F}y_{F})$,
which shows that equation \eqref{eq:Aeq} holds. \eqref{eq:conservedModulus}
follows by multiplying \eqref{eq:L} by $L$ and \eqref{eq:Aeq} by
$A$, respectively.\end{proof}
\begin{rem}
Lemma \ref{lem:ConsAngularMomentum} shows that in the inertial frame,
the total momenta $l(t):=Q(t)L(t)$ and $a(t):=Q(t)A(t)$ are conserved,
i.e. $l(t)=L_{0}$ and $a(t)=A_{0}$ by \eqref{eq:DefDerTrafo}. In
particular, if $y_{F}=x_{F}(0)=x_{B}(0)$, we obtain a constant translational
movement of the system in the inertial frame, $\eta'(t)=0$. 
\end{rem}
Since $\vert A(t)\vert$ is conserved by the system, for the study
of asymptotic behavior of solutions it is convenient to associate
a rigid angular velocity $\io(t):=I^{-1}A(t)$ to $A$ for all $t\geq0$.
It follows that 
\[
\mathrm{I}\io'+\Omega\times\mathrm{I}\io=0
\]
and that 
\[
\vert\mathrm{I}\io(t)\vert=\vert I\io(0)\vert=\vert A_{0}\vert
\]
for all $t\geq0$. We define the rigid angular velocity of the system
relative to $\io$ as 
\[
\tilde{\Omega}:=\Omega-\io,
\]
and note that this implies 
\begin{equation}
\int_{\mathcal{F}}y\times\bar{u}(y)\,\mathrm{d}y=-\mathrm{I}\tilde{\Omega}\label{eq:amTilde}
\end{equation}
by \eqref{eq:defofA}. 

In order to state equations \eqref{eq:FFP transformiert} in terms
of $\bar{u}$ and $\io$, we define a \emph{relative }pressure 
\[
\bar{p}(y):=p(y)+(\xi'+\Omega\times\xi)\cdot y+\frac{1}{2}\vert\Omega\times y\vert^{2}
\]
 and note that the first line in \eqref{eq:FFP transformiert} can
be expressed as 
\begin{equation}
\begin{array}{rcll}
\bar{u}'+\Omega'\times y-\nu\Delta\bar{u}+\nabla\bar{p}+2\Omega\times\bar{u}+(\bar{u}\cdot\nabla)\bar{u} & = & 0, & \text{in }(0,T)\times\mathcal{F},\end{array}\label{eq:A-1}
\end{equation}
where $\bar{p}$ has absorbed all dependence on $\xi$. Moreover,
Lemma \ref{lem:ConsAngularMomentum} shows that given $\Omega$, the
translational velocity $\xi$ can be calculated a posteriori and that
the translational movement of the center of mass can be decoupled
from the remaining system in both the weak and the strong setting.
An equivalent formulation of \eqref{eq:FFP transformiert} in terms
of $\bar{u}$ and $\io$ is thus given by 
\begin{equation}
\left\{ \begin{array}{rcll}
\bar{u}'+\Omega'\times y-\nu\Delta\bar{u}+\nabla\bar{p}+2\Omega\times\bar{u}+(\bar{u}\cdot\nabla)\bar{u} & = & 0, & \text{in }(0,T)\times\mathcal{F},\\
\mathrm{div}\,\bar{u} & = & 0, & \text{in }(0,T)\times\mathcal{F},\\
\bar{u} & = & 0, & \text{on }(0,T)\times\Gamma,\\
I\io'+\Omega\times I\io & = & 0, & t\in(0,T),
\end{array}\right.\label{eq:A-1-1}
\end{equation}
with initial conditions $\bar{u}(0)=\bar{u}_{0}$, $I\io(0)=A_{0}$.
This reduction naturally also shows in the kinetic energy $E(t):=\Vert U(t)\Vert_{L^{2}(\mathcal{S})}^{2}$.
For strong solutions of the full system \eqref{eq:FFP transformiert},
corresponding to \eqref{eq:EI}, we obtain the energy equation
\begin{equation}
E(t)+\int_{s}^{t}\Vert\nabla\bar{u}(\tau)\Vert_{2}^{2}\,\mathrm{d}\tau=E(s)\quad\text{for all }0\leq s<t\leq T_{max}.\label{eq:EE_1}
\end{equation}
Using \eqref{eq:amTilde} and $\int_{\mathcal{F}}\bar{u}(t)\,\mathrm{d}y=0$,
we calculate, both for weak and strong solutions, 
\begin{eqnarray*}
\Vert U(t)\Vert_{L^{2}(\mathcal{S})}^{2} & = & \Vert\bar{u}(t)+\Omega(t)\times\cdot+\xi(t)\Vert_{L^{2}(\mathcal{S})}^{2}\\
 & = & \Vert\bar{u}(t)\Vert_{2}^{2}-2\tom I\tom(t)+2\io I\tom(t)+\Vert\Omega(t)\times\cdot\Vert_{L^{2}(\mathcal{S})}^{2}+\mathrm{m}\vert\xi(t)\vert^{2}+2\xi(t)\cdot(\Omega(t)\times y_{c}).
\end{eqnarray*}
Note that by \eqref{eq:ItoIBIF}, 
\[
\Vert\Omega\times\cdot\Vert_{L^{2}(\mathcal{S})}^{2}=\Omega I\Omega+\mathrm{m}\vert\Omega\times y_{c}\vert^{2},
\]
so $E(t)=\Vert U(t)\Vert^{2}=\Vert\bar{u}(t)\Vert_{2}^{2}-\tom I\tom(t)+\io I\io(t)+\frac{1}{\mathrm{m}}\vert L(t)\vert^{2}$.
By Lemma \ref{lem:ConsAngularMomentum}, $\frac{\mathrm{d}}{\mathrm{d}t}\vert L(t)\vert^{2}=0$,
so that we can ignore this contribution in \eqref{eq:EI} and \eqref{eq:EE_1}
and abuse notation by referring to the kinetic energy 
\begin{equation}
E(t):=\Vert\bar{u}(t)\Vert_{2}^{2}-\tom I\tom(t)+\io I\io(t)\label{eq:Def_of_Energy}
\end{equation}
in the following. 
\begin{rem}
The kinetic energy $E(t)=\bar{E}(t)+\tilde{E}(t)$ splits into a rigid
part 
\[
\bar{E}(t)=\io I\io(t)
\]
and a \emph{positive} {}``relative'' fluid part 
\[
\tilde{E}(t)=\Vert w(t)\Vert_{L^{2}(\mathcal{S})}^{2}=\Vert\bar{u}(t)\Vert_{2}^{2}-\tom I\tom\geq0,
\]
where $w\in L^{2}(\mathcal{S})$ is given by 

\begin{equation}
\begin{cases}
\begin{array}{ll}
\bar{u}(t,y)+\tilde{\Omega}(t)\times y, & y\in\mathcal{F},\\
\tilde{\Omega}(t)\times y, & y\in\mathcal{B}.
\end{array}\end{cases}\label{eq:DefOfw}
\end{equation}
As $I$ is a positive matrix, we also have 
\begin{equation}
\tilde{E}(t)\leq\Vert\bar{u}(t)\Vert_{2}^{2}.\label{eq:EtildeUbar}
\end{equation}

\end{rem}
$\strut$
\begin{rem}
\label{rem: Exponential rate}There are two special cases in which
the rigid part $\bar{E}(t)$ is constant in time. By \eqref{eq:EE_1},
Gronwall's lemma, \eqref{eq:EtildeUbar} and Poincaré's inequality,
both cases imply exponential decay of $\tilde{E}(t)$. The first case
is that of $\vert A_{0}\vert=0$, which implies $\io(t)=I^{-1}A(t)=0$
for all $t\geq0$. This is treated in \cite{SilvestreTakahashi2012}
as the \emph{orthogonality condition}. The second case is the one
of $\mathcal{S}$ essentially being a sphere, i.e. $I=\mathrm{Id}_{\mathbb{R}^{3}}$
and $\io(t)=I^{-1}A_{0}$ for all $t\geq0$ by Lemma \ref{lem:ConsAngularMomentum}.
In this situation, the full structure does not have any preferred
direction of rotation, so that the fluid is only driven by its own
inertia. We briefly consider this special case again in Section \ref{sec:A-priori-characterizations}.
\end{rem}

\section{\label{sec:Properties-of-SDS}Properties of the semidynamical system
$(\bar{u},\io)$}

By Theorem \ref{thm:mainResultN-1}, given $U_{0}\in\mathcal{W}$,
there exists a strong solution $U\in\mathcal{X}^{T_{max}}$, which
implies $\bar{u},\io\in X_{2,2}^{T_{max}}\times H^{1}(\mathbb{R}^{3})$.
From the embedding \eqref{eq:BUCeinbettung} and Lemma \ref{lem:ConsAngularMomentum},
we deduce 
\[
(\bar{u},\io)\in BUC([0,T];H\times\Phi_{\vert A_{0}\vert})\qquad\text{for every }0<T<T_{max},
\]
where we recall $H=H_{0}^{1}(\mathcal{F})\cap L_{\sigma}^{2}(\mathcal{F})$
and we define the ellipsoid 
\[
\Phi_{\vert A_{0}\vert}:=\{r\in\mathbb{R}^{3}:\vert Ir\vert=\vert A_{0}\vert\}.
\]
For the remainder of the proof of Theorem \ref{thm:MR}, we choose
an arbitrary but fixed $A_{0}\in\mathbb{R}^{3}$ and, for every $\delta>0$,
we define 
\[
\mathcal{Z}:=H\times\Phi_{\vert A_{0}\vert},\quad\mathcal{Z}_{\delta}:=\{z=(\bar{v},a)\in\mathcal{Z}:\Vert\bar{v}\Vert_{H}\leq\delta\},
\]
endowed with the metric of $H\times\mathbb{R}^{3}$. For all $\delta>0$,
we define the semiflow $S_{t}^{\delta}:\mathcal{Z}_{\delta}\to\mathcal{Z}$
by $S_{t}^{\delta}(\bar{u}_{0},I^{-1}A_{0})=(\bar{u}(t),\io(t))$
for $0\leq t<T_{max}$. In the following, we write $S_{t}$ if no
confusion is possible, as only the domain, but not the map itself
depends on $\delta$. 
\begin{cor}
\label{cor:Corollary}The family $(S_{t})_{t}$ has the following
properties.
\begin{enumerate}
\item $(S_{t})_{0\leq t<T_{\delta,max}}$ defines a semidynamical system
(cf. \cite[Def. 9.1.1]{CazenaveHaraux1998}) from $\mathcal{Z}_{\delta}$
to $\mathcal{Z}$, i.e. 

\begin{enumerate}
\item \label{enu:61a}for all $0\leq t\leq T_{\delta,max}$, $S_{t}\in C(\mathcal{Z}_{\delta},\mathcal{Z})$,
\item \label{enu:61b}$S_{0}=\mathrm{Id}_{\mathcal{Z}}$,
\item \label{enu:61c}for all $0\leq t+s\leq T_{\delta,max}$, $S_{t+s}=S_{t}\circ S_{s}$,
\item \label{enu:61d}the function $t\mapsto S_{t}z$ is in $C([0,T_{\delta,max});\mathcal{Z})$
for all $z\in\mathcal{Z}_{\delta}$.
\end{enumerate}
\item \label{enu:62}For every $\delta>0$, there is a time $0<T_{\delta}\leq T_{max}$
such that for all $z\in\mathcal{Z}_{\delta}$ and for all $0\leq t\leq T_{\delta}$,
\[
\Vert\bar{u}(t)\Vert_{H}\leq2\delta.
\]

\item \label{enu:63}Given $\vert A_{0}\vert,\delta>0$, we can choose an
interval of existence of strong solutions uniformly in $\mathcal{Z}_{\delta}$,
i.e. $0<T_{\delta,max}:=\inf_{(\bar{u}_{0},I^{-1}A_{0})\in\mathcal{Z}_{\delta}}T_{max}(\bar{u}_{0},I^{-1}A_{0})$
exists.
\end{enumerate}
\end{cor}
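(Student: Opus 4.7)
The strategy is to import Theorem~\ref{thm:mainResultN-1} into the reduced $(\bar u,\io)$-picture via an explicit affine lift $\mathcal{Z}_{\delta}\to\mathcal{W}$, then to close the delicate uniform bound in~\ref{enu:62} by a bootstrap $H^{1}$-energy estimate. Given $z=(\bar u_{0},\io_{0})\in\mathcal{Z}_{\delta}$, lift it to $U_{0}\in\mathcal{W}$ by inverting~\eqref{eq:amTilde} at $t=0$ (i.e.\ $\Omega_{0}:=\io_{0}+I^{-1}\int_{\mathcal{F}}y\times\bar u_{0}\,\mathrm{d}y$) and decoupling the translation by setting $\xi_{0}:=0$, as justified by the remark following Lemma~\ref{lem:ConsAngularMomentum}. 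This lift is affine continuous from $H\times\mathbb{R}^{3}$ into $\mathcal{W}$ and yields $\Vert U_{0}\Vert_{\mathcal{W}}\leq K(\delta,\vert A_{0}\vert)$ uniformly in $z\in\mathcal{Z}_{\delta}$.

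Item~\ref{enu:63} then follows directly: the explicit threshold~\eqref{eq:sinnlos1} in the proof of Theorem~\ref{thm:mainResultN-1} gives $T_{\max}(U_{0})\geq\tau(\Vert U_{0}\Vert_{\mathcal{W}})$ for a positive, decreasing function $\tau$, so $T_{\delta,\max}\geq\tau(K(\delta,\vert A_{0}\vert))>0$. For part~(1): items~\ref{enu:61b} and~\ref{enu:61c} are immediate (the latter from uniqueness in Theorem~\ref{thm:mainResultN-1}, since restarting at $s$ with data $S_{s}z$ produces the unique strong continuation, which must coincide with the shifted original solution by uniqueness). For~\ref{enu:61a} and~\ref{enu:61d}, I would compose the continuous lift with the continuous-dependence estimate of Theorem~\ref{thm:mainResultN-1}, then with the embeddings $X_{2,2}^{T}\hookrightarrow BUC([0,T];H^{1})$ from~\eqref{eq:BUCeinbettung} and $H^{1}(0,T;\mathbb{R}^{3})\hookrightarrow C([0,T];\mathbb{R}^{3})$, and recover $(\bar u(t),\io(t))$ as an affine continuous functional of $(u(t),\xi(t),\Omega(t))$ via \eqref{eq:defofA}.

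Item~\ref{enu:62} is the delicate step. I assume the bootstrap hypothesis $\Vert\bar u(t)\Vert_{H}\leq 2\delta$ on a maximal subinterval $[0,T^{*})$ and derive a closed differential inequality on $y(t):=\Vert\nabla\bar u(t)\Vert_{2}^{2}$. Testing the first line of~\eqref{eq:A-1-1} with $-P\Delta\bar u$ (where $P$ denotes the Leray projector), invoking the Stokes regularity bound $\Vert\bar u\Vert_{H^{2}(\mathcal{F})}\leq C\Vert\Delta\bar u\Vert_{2}$ on $H^{2}\cap H$, Agmon's inequality to handle $(\bar u\cdot\nabla)\bar u$, and the pointwise estimates $\vert\Omega(t)\vert\leq\vert\io(t)\vert+C\Vert\bar u(t)\Vert_{2}\leq C(\vert A_{0}\vert,\delta)$ and $\vert\io'(t)\vert\leq C\vert\Omega(t)\vert\vert\io(t)\vert$ (the latter from the fourth line of~\eqref{eq:A-1-1}), one arrives at an inequality of the form
\[
\frac{\mathrm{d}}{\mathrm{d}t}\Vert\nabla\bar u\Vert_{2}^{2}\leq\alpha(\delta,\vert A_{0}\vert)+\beta(\delta,\vert A_{0}\vert)\,\Vert\nabla\bar u\Vert_{2}^{6}.
\]
Comparison with the scalar ODE $y'=\alpha+\beta y^{3}$ starting from $y(0)\leq C_{p}^{-2}\delta^{2}$ shows that the bootstrap is self-consistent on $[0,T_{\delta}]$ with
\[
T_{\delta}:=\int_{\delta^{2}}^{4\delta^{2}}\frac{\mathrm{d}s}{\alpha+\beta s^{3}}>0
\]
depending only on $\delta$ and $\vert A_{0}\vert$; a standard bootstrap argument then upgrades this to $T^{*}\geq T_{\delta}$ and gives $\Vert\bar u(t)\Vert_{H}\leq 2\delta$ on $[0,T_{\delta}]$ via Poincar\'e. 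The main obstacle is the pointwise control of $\Omega'$, which couples fluid regularity to the rigid-body equation; the decomposition $\Omega=\io+\tilde\Omega$ with $I\tilde\Omega=-\int_{\mathcal{F}}y\times\bar u$ reduces this to estimating $\mathrm{d}/\mathrm{d}t\int_{\mathcal{F}}y\times\bar u$, whose contribution is absorbed into the viscous term $\nu\Vert\Delta\bar u\Vert_{2}^{2}$ on the left-hand side of the energy identity.
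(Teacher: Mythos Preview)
Your treatment of parts~(1) and~\eqref{enu:63} is essentially the paper's, with one cosmetic difference: you derive the uniform lower bound on $T_{\max}$ directly from the contraction--mapping threshold~\eqref{eq:sinnlos1}, whereas the paper deduces it a posteriori from the higher--order estimate~\eqref{eq:HOE} together with the reduced blow--up criterion~\eqref{eq:trueBlowUp}. Both routes are valid.

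For part~\eqref{enu:62} your approach is genuinely different and, as written, has a gap. You test~\eqref{eq:A-1-1} with $-P\Delta\bar u$; the paper tests with $\bar u'$. With your test the coercive term is $\nu\Vert A\bar u\Vert_{2}^{2}$, but the forcing term $\langle\Omega'\times y,\,A\bar u\rangle$ contains the piece $\langle\tilde\Omega'\times y,\,A\bar u\rangle$, and $\vert\tilde\Omega'\vert\leq C\Vert\bar u'\Vert_{2}$ by differentiating~\eqref{eq:amTilde}. Your proposed ``absorption'' thus requires control of $\Vert\bar u'\Vert_{2}$, which your test does not produce. Substituting the equation for $\bar u'$ reintroduces $\Omega'\times y$ and is circular; solving for $\tilde\Omega'$ gives $(I-I_{F})\tilde\Omega'=I_{F}\io'+(\text{terms bounded by }\Vert A\bar u\Vert_{2})$, so the absorption works only if $I-I_{F}=I_{B}+\mathrm{m}\,y_{c}\times(y_{c}\times\cdot)$ is positive definite with a controlled inverse. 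That is a geometric condition ($a^{T}I_{B}a>\mathrm{m}\vert y_{c}\times a\vert^{2}$ for all $a$) which is \emph{not} assumed and can fail, e.g.\ when $\mathcal{B}$ is small and dense while $y_{c}$ is far from the origin.

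The paper avoids this by testing with $\bar u'$, which puts $\Vert\bar u'\Vert_{2}^{2}$ on the left and produces on the right exactly $\int(\Omega'\times y)\cdot\bar u'=-\Omega'\cdot I\tilde\Omega'$. After splitting $\Omega'=\io'+\tilde\Omega'$, the dangerous piece $\tilde\Omega' I\tilde\Omega'$ is moved to the left, and the algebraic identity~\eqref{eq:ChoiceOfMu},
\[
(1-\mu)\Vert\bar u'\Vert_{2}^{2}-\tilde\Omega' I\tilde\Omega'=(1-\mu)\Vert w'\Vert_{2}^{2}+(1-2\mu)\tilde\Omega' I_{B}\tilde\Omega'-\mu\,\tilde\Omega' I_{F}\tilde\Omega'\geq 0
\]
(with $w$ as in~\eqref{eq:DefOfw}) gives nonnegativity for small $\mu$ without any restriction on the geometry. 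This exact cancellation is the point of the argument and has no analogue in the $A\bar u$--test; your sketch does not provide a substitute for it.
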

\begin{proof}
\eqref{enu:61a} follows from continuous dependence of strong solutions
on their data, where $T_{\delta,max}$ is chosen as in \eqref{enu:63}.
Given two initial data $z_{0}^{1},z_{0}^{2}\in\mathcal{Z}_{\delta}$
and corresponding solutions $U^{1}$ and $U^{2}$, by \eqref{eq:BUCeinbettung},
\[
\sup_{t\in(0,T_{\delta,max})}\Vert S_{t}z_{0}^{1}-S_{t}z_{0}^{2}\Vert_{\mathcal{Z}}\leq C(T_{\delta,max})\Vert U^{1}-U^{2}\Vert_{\mathcal{X}_{2,2}^{T_{\delta,max}}}\to0\;\text{as }z_{0}^{1}\to z_{0}^{2}
\]
by Theorem \ref{thm:mainResultN-1}. Thus, for all $0\leq t\leq T_{\delta,max}$,
$S_{t}z_{0}^{1}\to S_{t}z_{0}^{2}$ in $\mathcal{Z}$ as $z_{0}^{1}\to z_{0}^{2}$
in $\mathcal{Z}$. \eqref{enu:61d}, \eqref{enu:61b} follow directly
from \eqref{eq:BUCeinbettung} and \eqref{enu:61c} follows by definition. 

In order to prove \eqref{enu:62}, we multiply the first line in \eqref{eq:A-1-1}
by $\bar{u}'$ and integrate over $\mathcal{F}$. Using integration
by parts on $-\nu\int_{\mathcal{F}}\Delta\bar{u}\cdot\bar{u}'\,\mathrm{d}y$,
which can be justified by approximation in the strong setting, we
obtain:
\begin{equation}
\frac{\nu}{2}\frac{\mathrm{d}}{\mathrm{d}t}\Vert\nabla\bar{u}(t)\Vert_{2}^{2}+\Vert\bar{u}'(t)\Vert_{2}^{2}=\Omega'\cdot I\tom'(t)-\int_{\mathcal{F}}(((\bar{u}\cdot\nabla)\bar{u})\cdot\bar{u}')(t)\,\mathrm{d}y-2\int_{\mathcal{F}}((\Omega\times\bar{u})\cdot\bar{u}')(t)\,\mathrm{d}y.\label{eq:HOE1}
\end{equation}
The second term on the right-hand side satisfies $\int_{\mathcal{F}}((\bar{u}\cdot\nabla)\bar{u})\cdot\bar{u}'\,\mathrm{d}y\leq\frac{\mu}{4}\Vert\bar{u}'\Vert_{2}^{2}+C\Vert(\bar{u}\cdot\nabla)\bar{u}\Vert_{2}^{2}$,
where $\mu>0$ has to be a small constant which will be determined
below in \eqref{eq:ChoiceOfMu}. We obtain 
\begin{eqnarray*}
\Vert((\bar{u}\cdot\nabla)\bar{u})(t)\Vert_{2}^{2} & \leq & \Vert\bar{u}(t)\Vert_{6}^{2}\Vert\nabla\bar{u}(t)\Vert_{3}^{2}\\
 & \leq & C\Vert\bar{u}(t)\Vert_{H^{1}(\mathcal{F})}^{2}\Vert\nabla\bar{u}(t)\Vert_{2}\Vert\nabla\bar{u}(t)\Vert_{6}\\
 & \leq & C\Vert\nabla\bar{u}(t)\Vert_{2}^{3}(\Vert\nabla\bar{u}(t)\Vert_{2}+\Vert D^{2}\bar{u}(t)\Vert_{2}),
\end{eqnarray*}
by Hölder's inequality, the Sobolev embedding $H^{1}(\mathcal{F})\hookrightarrow L^{6}(\mathcal{F})$,
interpolation for $L^{3}(\mathcal{F})$ and Poincar\'{e}'s inequality.
Moreover, $\bar{u}(t)$ satisfies the stationary Stokes problem
\[
\left\{ \begin{array}{rcll}
-\mu\Delta\bar{u}+\nabla\bar{p} & = & -\bar{u}'-\Omega'\times y-2\Omega\times\bar{u}-(\bar{u}\cdot\nabla)\bar{u}, & \text{in }\mathcal{F},\\
\mathrm{div}\,\bar{u} & = & 0, & \text{in }\mathcal{F},\\
\bar{u} & = & 0, & \text{on }\Gamma,
\end{array}\right.
\]
almost everywhere in time. Thus, by properties of the Stokes operator
(cf. e.g. \cite{Solonnikov77}), 
\[
\Vert D^{2}\bar{u}\Vert_{2}\leq C\left(\Vert\bar{u}'\Vert_{2}+\Vert\Omega'\times y\Vert_{2}+\Vert\Omega\times\bar{u}\Vert_{2}+\Vert(\bar{u}\cdot\nabla)\bar{u}\Vert_{2}+\Vert\nabla\bar{u}\Vert_{2}\right).
\]
It follows that
\begin{equation}
\Vert(\bar{u}\cdot\nabla)\bar{u}\Vert_{2}^{2}\leq C\Vert\nabla\bar{u}\Vert_{2}^{3}\left(\Vert\bar{u}'\Vert_{2}+\vert\tom'\vert^{2}+\vert\io'\vert^{2}+\vert\Omega\vert\Vert\bar{u}\Vert_{2}+\Vert(\bar{u}\cdot\nabla)\bar{u}\Vert_{2}+\Vert\nabla\bar{u}\Vert_{2}\right).\label{eq:sinnlos2}
\end{equation}
Note that $\vert\tom\vert\leq C\Vert\bar{u}\Vert_{2}$ , $\vert\io\vert\leq C\vert A_{0}\vert$
and 
\[
\io'I\io'\leq C(\vert A_{0}\vert^{2}+\Vert\bar{u}\Vert_{2}^{2})
\]
by \eqref{eq:Aeq}. Thus, by Young's and Poincaré's inequalities,
\eqref{eq:sinnlos2} implies 
\[
\frac{1}{2}\Vert(\bar{u}\cdot\nabla)\bar{u}\Vert_{2}^{2}\leq C\Vert\nabla\bar{u}\Vert_{2}^{6}+\frac{\mu}{4}\Vert\bar{u}'\Vert_{2}^{2}+C\Vert\nabla\bar{u}\Vert^{3}\left(\vert A_{0}\vert^{2}+(1+\vert A_{0}\vert)\Vert\nabla\bar{u}\Vert_{2}+\Vert\nabla\bar{u}\Vert_{2}^{2}\right).
\]
In conclusion, the second term on the right-hand side of \eqref{eq:HOE1}
satisfies 
\[
\vert\int_{\mathcal{F}}((\bar{u}\cdot\nabla)\bar{u})\cdot\bar{u}'\,\mathrm{d}y\vert\leq\frac{\mu}{2}\Vert\bar{u}'\Vert_{2}^{2}+C\Vert\nabla\bar{u}\Vert_{2}^{3}\left(\vert A_{0}\vert^{2}+(1+\vert A_{0}\vert)\Vert\nabla\bar{u}\Vert_{2}+\Vert\nabla\bar{u}\Vert_{2}^{2}+\Vert\nabla\bar{u}\Vert_{2}^{3}\right).
\]
The last term on the right-hand side of \eqref{eq:HOE1} satisfies
\[
\vert\int_{\mathcal{F}}(\Omega\times\bar{u})\cdot\bar{u}'\,\mathrm{d}y\vert\leq C(\vert A_{0}\vert+\Vert\bar{u}\Vert_{2}^{2})\Vert\bar{u}\Vert_{2}^{2}+\frac{\mu}{4}\Vert\bar{u}'\Vert_{2}^{2}
\]
and for the first term on the right-hand side of \eqref{eq:HOE1},
we obtain
\begin{eqnarray*}
\io'I\tom' & \leq & C\io'I\io'+\frac{\mu}{4}\Vert\bar{u}'\Vert_{2}^{2}\\
 & \leq & C(\vert A_{0}\vert^{2}+\Vert\bar{u}\Vert_{2}^{2})+\frac{\mu}{4}\Vert\bar{u}'\Vert_{2}^{2},
\end{eqnarray*}
so that \eqref{eq:HOE1} becomes 
\begin{eqnarray}
 &  & \frac{1}{2}\frac{\mathrm{d}}{\mathrm{d}t}\Vert\nabla\bar{u}\Vert_{2}^{2}+(1-\mu)\Vert\bar{u}'\Vert_{2}^{2}-\tom'I\tom'\label{eq:sinnlos 3}\\
 & \leq & C\vert A_{0}\vert^{2}+C\Vert\nabla\bar{u}\Vert_{2}^{2}\left(1+\vert A_{0}\vert+\vert A_{0}\vert^{2}\Vert\nabla\bar{u}\Vert_{2}+(1+\vert A_{0}\vert)\Vert\nabla\bar{u}\Vert_{2}^{2}+\Vert\nabla\bar{u}\Vert_{2}^{3}+\Vert\nabla\bar{u}\Vert_{2}^{4}\right).\nonumber 
\end{eqnarray}
Note that by \eqref{eq:DefOfw}, we obtain 
\begin{equation}
(1-\mu)\Vert\bar{u}'\Vert_{2}^{2}-\tom'I\tom'=(1-\mu)\Vert w'\Vert_{2}^{2}+(1-2\mu)\tom'I_{B}\tom'-\mu\tom'I_{F}\tom'\geq0,\label{eq:ChoiceOfMu}
\end{equation}
if $\mu$ is chosen sufficiently small, depending on the {}``ratio''
of $I_{B}$ and $I_{F}$. Integrating \eqref{eq:sinnlos 3} in time
yields
\begin{equation}
\Vert\nabla\bar{u}\Vert_{2}^{2}(t)\leq\Vert\nabla\bar{u}_{0}\Vert_{2}^{2}+Ct\vert A_{0}\vert^{2}+\sum_{i=2}^{6}C(i,\vert A_{0}\vert)\int_{0}^{t}\Vert\nabla\bar{u}\Vert_{2}^{i}(s)\,\mathrm{d}s.\label{eq:HOE}
\end{equation}
Now it is clear that given $\Vert\bar{u}_{0}\Vert_{H}\leq\delta$,
$\Vert\bar{u}(t)\Vert_{H}^{2}\leq4\delta^{2}$ holds as long as $t\leq T_{\delta}:=\frac{C\delta^{2}}{\vert A_{0}\vert^{2}+\sum_{i=2}^{6}\delta^{i}},$
where $C$ is a constant depending on $\mathcal{B}$, $\mathcal{F}$
and $\rho_{B}$. Moreover, since the moduli of $A$ and $L$ are conserved
along solutions, the blow-up criterion given in \eqref{eq:BlowUp}
reduces to 
\begin{equation}
\Vert\bar{u}(t)\Vert_{H}\to\infty\text{ for }t\to T_{max}\label{eq:trueBlowUp}
\end{equation}
and thus estimate \eqref{eq:HOE} shows \eqref{enu:63}.
\end{proof}

\section{\label{sec:Properties-of-WS}Properties of the weak solution}

There may be several different methods of constructing weak solutions
for \eqref{eq:FFP transformiert}, but of course, we do not show uniqueness
of global solutions here, so let us state the requirements needed
of weak solutions in general in order to make our subsequent arguments
work. We then show that the solutions constructed in \cite{SilvestreTakahashi2012}
satisfy these requirements.
\begin{assumption}
\label{Assumption on WS}Given $U_{0}\in\mathcal{L}$, there is a
weak solution $U\in C_{w}([0,\infty;\mathcal{L})\cap L_{loc}^{2}(0,\infty;\mathcal{H})$
of \eqref{eq:FFP transformiert} which satisfies
\begin{enumerate}
\item \label{enu:711}for all $t\geq0$, $\vert I\io\vert^{2}(t)=\vert A_{0}\vert^{2},$
\item \label{enu:712}the \emph{strong energy inequality}, i.e. for almost
all $0\leq s<t\leq\infty$, 
\begin{equation}
E(t)+2\nu\int_{s}^{t}\Vert\nabla\bar{u}(\tau)\Vert_{2}^{2}\,\mathrm{d}\tau\leq E(s),\label{eq:SEI}
\end{equation}
where $E(s)$ is defined in \eqref{eq:Def_of_Energy}.
\item \emph{\label{enu:713}weak-strong-uniqueness}, i.e. if $U_{0}\in\mathcal{W}$,
then $U$ is unique on $[0,T_{max}(U_{0}))$ and it is equal to the
strong solution $W\in\mathcal{X}^{T_{max}}$, also emenating from
$U_{0}$, given on $[0,T_{max})$ by Theorem \ref{thm:mainResultN-1}.
In particular, every strong solution is a weak solution.
\end{enumerate}
\end{assumption}
\begin{cor}
The weak solution $U$ given in Theorem \ref{thm:ExWeak} satisfies
Assumption \ref{Assumption on WS}. \end{cor}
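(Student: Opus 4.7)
The plan is to verify the three conditions \eqref{enu:711}--\eqref{enu:713} of Assumption \ref{Assumption on WS} one by one, leveraging the Galerkin construction of \cite{SilvestreTakahashi2012} and the local strong theory from Theorem \ref{thm:mainResultN-1}.

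Condition \eqref{enu:711} is essentially a restatement of what has already been shown. By definition $\bar{\Omega}(t) := I^{-1}A(t)$, so $I\bar{\Omega}(t) = A(t)$, and Lemma \ref{lem:ConsAngularMomentum}, which has been proved for every weak solution from Theorem \ref{thm:ExWeak}, gives $\vert A(t)\vert = \vert A_0\vert$ for all $t \geq 0$. Squaring produces \eqref{enu:711}.

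For the strong energy inequality \eqref{eq:SEI}, I would return to the Galerkin scheme used in the proof of Theorem \ref{thm:ExWeak}. The finite-dimensional approximants $U_n$ are ODE solutions with finitely many modes, hence smooth enough that testing with $U_n$ itself yields the energy \emph{equality}
\[
\tfrac{1}{2}\Vert U_n(t)\Vert_{L^2(\mathcal{S})}^{2} + 2\nu \int_s^t \Vert D(u_n)(\tau)\Vert_{2}^{2}\,\mathrm{d}\tau = \tfrac{1}{2}\Vert U_n(s)\Vert_{L^2(\mathcal{S})}^{2}
\]
for every $0\leq s \leq t$. Passing to the limit, weak-$*$ convergence in $L^\infty(0,\infty;\mathcal{L})$ and weak convergence in $L^2_{loc}([0,\infty);\mathcal{H})$ give lower semicontinuity of the $L^2$-norm and of the dissipation integral for every $t$, while an Aubin--Lions-type compactness argument yields a subsequence and a null set $N \subset [0,\infty)$ such that $U_n(s) \to U(s)$ strongly in $\mathcal{L}$ for all $s \notin N$. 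Combining these facts yields \eqref{eq:SEI} for every $t \geq s$ and a.e.\ $s$; using Lemma \ref{lem:ConsAngularMomentum} to subtract off the conserved $\frac{1}{\mathrm{m}}\vert L(t)\vert^2$ term then reduces $\Vert U(t)\Vert_{L^2(\mathcal{S})}^2$ to the $E(t)$ defined in \eqref{eq:Def_of_Energy}.

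The genuine work is condition \eqref{enu:713}, weak-strong uniqueness, and I expect this to be the main obstacle. Given $U_0 \in \mathcal{W}$, Theorem \ref{thm:mainResultN-1} supplies a strong solution $W \in \mathcal{X}^{T_{max}}$ which, as remarked after Theorem \ref{thm:ExWeak}, is simultaneously a weak solution on $[0,T_{max})$ with the additional regularity $W \in L^\infty(0,T;\mathcal{H})$ via \eqref{eq:BUCeinbettung}. The plan is a Leray--Serrin--Prodi-type argument adapted to the coupled fluid--rigid system: use $W$ as a test function in \eqref{eq:WeakFormula} for $U$ (legal because $W(\tau) \in \mathcal{H}$ a.e.), use $U$ as a test function in the equation for $W$ (legal because $W' \in L^2(\mathcal{L})$ thanks to strong regularity), and combine the resulting two identities with the strong energy inequality for $U$ proved in the previous step and the energy equality for $W$. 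Writing $Z = U - W$, this should produce
\[
\tfrac{1}{2}\Vert Z(t)\Vert_{L^2(\mathcal{S})}^2 + 2\nu \int_0^t \Vert D(\bar z)(\tau)\Vert_{2}^{2}\,\mathrm{d}\tau \leq \int_0^t \bigl\vert b(Z,W,Z) + b(W,Z,Z)\bigr\vert\,\mathrm{d}\tau,
\]
where most cross terms cancel by antisymmetry of $b(\cdot,\cdot,\cdot)$ in its last two arguments (for the Navier--Stokes part and the rigid parts separately), and the remaining terms are controlled by $\Vert W\Vert_{L^\infty(\mathcal{H})}$ together with Sobolev, Korn and Poincar\'e inequalities on $\mathcal{F}$, as well as boundedness of the rigid bilinear contributions to $b$. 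Gronwall's lemma with $Z(0) = 0$ then forces $Z \equiv 0$ on $[0,T_{max})$. The technical points requiring care are the rigid contributions $m_B(\Omega_V \times \xi_U)\cdot \xi_W$ and $\Omega_V \times I_B \Omega_U \cdot \Omega_W$ in $b$, whose symmetries under $V \leftrightarrow W$ must be exploited correctly, and the justification of the cross-testing $\int_0^t \langle U', W\rangle_{\mathcal{H}}$, which I would handle by a standard time-mollification of $U$ in $\mathcal{H}'$ before passing to the limit.
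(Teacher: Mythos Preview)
Your overall strategy for all three items matches the paper's. One concrete inaccuracy to flag in part \eqref{enu:713}: in this paper's trilinear form $b(U,V,W)$ the convecting velocity sits in the \emph{second} slot, so the (partial) antisymmetry of the Navier--Stokes and Coriolis pieces is in the \emph{first and third} arguments, not the last two; moreover the rigid term $(\Omega_V\times I_B\Omega_U)\cdot\Omega_W$ is not antisymmetric in any pair and produces an extra correction that must be tracked by hand (the paper uses $b(W,W,U_k)=-b(U_k,W,W)-\Omega_W\cdot(\Omega_{U_k}\times I_B\Omega_W)$ together with $b(W,U,W)=\Omega_W\cdot(\Omega_U\times I_B\Omega_W)$). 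With the correct bookkeeping the remainder is $\int_0^t b(Z,Z,W)\,\mathrm{d}s$, not your $b(Z,W,Z)+b(W,Z,Z)$, and the paper closes Gronwall via $\Vert W(s)\Vert_{L^\infty(\mathcal{S})}^2\in L^1(0,T)$ rather than merely $\Vert W\Vert_{L^\infty(0,T;\mathcal{H})}$. A further small difference: instead of mollifying, the paper tests the strong equation for $W$ with the Galerkin approximants $U_k$ and passes to the limit only after integrating by parts in time.
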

\begin{proof}
Property \eqref{enu:711} was proved in Lemma \ref{lem:ConsAngularMomentum}.
Since $\nabla\bar{u}=D(U)$ on $\mathcal{F}$ , for $s=0$, \eqref{enu:712}
is a consequence of \eqref{eq:EI} and the discussion of \eqref{eq:Def_of_Energy}
in Section \ref{sec:Conservation-of-Momenta} and we note that for
a general weak solution, \eqref{enu:712} in this form implicitly
gives conservation of linear momentum. In \eqref{eq:EI}, $U$ is
constructed by a Galerkin approximation where the approximants $U_{k}$
satisfy the energy equality \eqref{eq:EE_1}, and they converge strongly
to $U$ in the norm $L_{loc}^{2}(0,\infty;\mathcal{L})$ for a subsequence
\cite[p. 16]{SilvestreTakahashi2012}. This implies $\Vert U_{k}(s)\Vert_{2}^{2}\to\Vert U(s)\Vert_{2}^{2}$
for almost all $s\in(0,\infty)$ for a subsequence, so that \eqref{eq:SEI}
also follows by passing to the limit and weak lower semicontinuity
of the norm. 

In order to prove \eqref{enu:713}, we apply $W$ as a test function
for $U$ and obtain
\begin{equation}
\langle U',W\rangle+a(U,W)+b(U,U,W)=0,\qquad\text{a.e. in }(0,T_{max}).\label{eq:UtestW}
\end{equation}
At the same time, we can apply the approximants $U_{k}$ of $U$ as
test functions for $W$ to get 
\begin{equation}
\langle W',U_{k}\rangle+a(W,U_{k})+b(W,W,U_{k})=0,\qquad\text{a.e. in }(0,T_{max}).\label{eq:WtestV}
\end{equation}
We integrate both equations in time and note that 
\[
\int_{0}^{t}\langle W',U_{k}\rangle(s)\,\mathrm{d}s=-\int_{0}^{t}\langle W,U_{k}'\rangle(s)\,\mathrm{d}s+\langle W,U\rangle(t)-\Vert U_{0}\Vert_{L^{2}(\mathcal{S})}^{2}.
\]
We add up the (original version of the) energy inequality for $U$,
\[
\Vert U(t)\Vert_{L^{2}(\mathcal{S})}^{2}+2\int_{0}^{t}a(U,U)(s)\,\mathrm{d}s\leq\Vert U_{0}\Vert_{L^{2}(\mathcal{S})}^{2}
\]
and the energy equality for $W$, 
\[
\Vert W(t)\Vert_{L^{2}(\mathcal{S})}^{2}+2\int_{0}^{t}a(W,W)(s)\,\mathrm{d}s=\Vert U_{0}\Vert_{L^{2}(\mathcal{S})}^{2}
\]
and subtract twice the time integrals of \eqref{eq:WtestV} and \eqref{eq:UtestW}
and pass to the limit in the linear terms to obtain 
\begin{equation}
\Vert(U-W)(t)\Vert_{L^{2}(\mathcal{S})}^{2}+2\int_{0}^{t}a(U-W,U-W)\,\mathrm{d}s\leq\int_{0}^{t}b(W,W,U_{k})(s)+b(U,U,W)(s)\,\mathrm{d}s.\label{eq:some w-s-estimate}
\end{equation}
Manipulation and passage to the limit on the right-hand side is critical
only in the terms of type $\int_{0}^{t}\int_{\mathcal{F}}((\bar{w}\cdot\nabla)\bar{w})\cdot\bar{u}_{k}\,\mathrm{d}y\mathrm{d}t$,
which work as usual for the Navier-Stokes problem and are justified
also in \cite[p. 13]{SilvestreTakahashi2012}. In particular, we can
show here that 
\[
\lim_{k\to\infty}\int_{0}^{t}b(W,W,U_{k})(s)+b(U,U,W)(s)\,\mathrm{d}s=\int_{0}^{t}b(U-W,U-W,W)(s)\,\mathrm{d}s,
\]
using $b(W,W,U_{k})=-b(U_{k},W,W)-\Omega_{W}\cdot(\Omega_{U}\times I_{B}\Omega_{W})$
and $b(W,U,W)=\Omega_{W}\cdot(\Omega_{U}\times I_{B}\Omega_{W})$.
Let $Z:=U-W$. It remains to estimate 
\begin{eqnarray}
\int_{0}^{t}b(Z,Z,W)\,\mathrm{d}s & \leq & C\int_{0}^{t}\Vert W\Vert_{L^{\infty}(\mathcal{S})}(\Vert z\Vert_{2}^{2}+\Omega_{Z}I_{B}\Omega_{Z}+m_{B}\vert\xi_{Z}\vert^{2})\,\mathrm{d}s\nonumber \\
 &  & +\nu\int_{0}^{t}\Vert\nabla\bar{z}\Vert_{2}^{2}\,\mathrm{d}s+C\int_{0}^{T}\Vert w\Vert_{\infty}^{2}\Vert z\Vert_{2}^{2}\,\mathrm{d}s\nonumber \\
 & \leq & \int_{0}^{t}a(Z,Z)\,\mathrm{d}s+C\int_{0}^{t}\Vert W\Vert_{L^{\infty}(\mathcal{S})}^{2}\Vert Z\Vert_{L^{2}(\mathcal{S})}^{2}(s)\,\mathrm{d}s\label{eq: some w-s-estimate 2}
\end{eqnarray}
by Hölder's and Young's inequalities. Since $s\mapsto\Vert W(s)\Vert_{L^{\infty}(\mathcal{S})}^{2}$
is integrable on $(0,T),$ $T<T_{max}$ by the assumption $W\in\mathcal{X}_{2,2}^{T}$,
by Gronwall's Lemma, \eqref{eq:some w-s-estimate} and \eqref{eq: some w-s-estimate 2}
imply $U=W$ on $(0,T)$. 
\end{proof}

\section{\label{sec:Strong-solutions-for-large-time}Strong solutions for
large time}

For the Navier-Stokes equations, the strong energy inequality and
weak-strong uniqueness imply a Leray Structure Theorem (\cite{Leray1934b}
and cf. \cite[Sect. 6]{GaldiLecture2000} for a survey on known results
depending on the fluid domain). In particular, every weak solution
can be shown to remain regular after some (possibly large) time.
\begin{prop}
\label{prop:GlobalSol}Given $A_{0}\in\mathbb{R}^{3}$ and $\bar{u}_{0}\in L_{\sigma}^{2}(\mathcal{F})$,
for every weak solution $U$ satisfying Assumption \ref{Assumption on WS}, 
\begin{enumerate}
\item there is a time $T_{*}(\bar{u}_{0},A_{0})$ such that $(\bar{u},\io)(T_{*})\in H\times\mathbb{R}^{3}$
and $(\bar{u},\io)$ is the unique strong solution of \eqref{eq:A-1-1}
on $(T_{*},\infty)$ with initial values $(\bar{u},\io)(T_{*})$. 
\item $\Vert\bar{u}(t)\Vert_{H}\to0$ as $t\to\infty$. 
\end{enumerate}
\end{prop}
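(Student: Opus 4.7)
The strategy is to combine the global-in-time energy bound from the strong energy inequality with the local-in-time a priori estimate of Corollary~\ref{cor:Corollary} in order to patch together an infinite sequence of local strong solutions, using weak-strong uniqueness at each restart to identify the result with the given weak solution.

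First I would extract from the strong energy inequality \eqref{eq:SEI} the square-integrability of $\|\nabla\bar{u}\|_{2}$ on $(0,\infty)$: since $E(t)=\tilde{E}(t)+\io I\io(t)\ge 0$ by \eqref{eq:Def_of_Energy} and the subsequent remark, and since $E$ is non-increasing in the a.e.\ sense, taking $s=0$ yields
\[
2\nu\int_{0}^{\infty}\|\nabla\bar{u}(\tau)\|_{2}^{2}\,\mathrm{d}\tau\le E(0)<\infty.
\]
Combined with $U\in L_{loc}^{2}([0,\infty);\mathcal{H})$ this gives $\bar{u}(\tau)\in H$ for almost every $\tau\ge 0$, and Poincar\'e's inequality yields $\|\bar{u}\|_{H}\le c\|\nabla\bar{u}\|_{2}$ with $c:=(1+C_{p}^{2})^{1/2}$ for such $\bar{u}$.

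Now fix $\delta>0$ and let $T_{\delta}>0$ be the uniform-in-data local time of existence from Corollary~\ref{cor:Corollary}(2)--(3). I pick $T(\delta)$ large enough that
\[
\int_{T(\delta)}^{\infty}\|\nabla\bar{u}(\tau)\|_{2}^{2}\,\mathrm{d}\tau\le\frac{\delta^{2}T_{\delta}}{4c^{2}},
\]
so that on every subinterval of $[T(\delta),\infty)$ of length $T_{\delta}/2$, the mean-value theorem supplies a time $\tau$ with $\|\nabla\bar{u}(\tau)\|_{2}^{2}\le\delta^{2}/(2c^{2})$, hence $\|\bar{u}(\tau)\|_{H}\le\delta$. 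Choose such a $T_{*}\ge T(\delta)$ (also in the full-measure set where $\bar{u}\in H$); recovering $\xi(T_{*})$ via Lemma~\ref{lem:ConsAngularMomentum} gives $U(T_{*})\in\mathcal{W}$. Theorem~\ref{thm:mainResultN-1} then produces a local strong solution $W$ on $[T_{*},T_{*}+T_{max})$; Corollary~\ref{cor:Corollary}(2) extends it at least to $[T_{*},T_{*}+T_{\delta}]$ with $\|\bar{w}(t)\|_{H}\le 2\delta$, and weak-strong uniqueness (Assumption~\ref{Assumption on WS}(3), applied to the weak solution time-shifted by $T_{*}$) identifies $W$ with $U$ there. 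Inside $[T_{*}+T_{\delta}/2,T_{*}+T_{\delta}]$ the same mean-value argument supplies a new restart time $\tau_{1}$ with $\|\bar{u}(\tau_{1})\|_{H}\le\delta$; iterating produces $\tau_{n}\to\infty$ with $\tau_{n+1}-\tau_{n}\ge T_{\delta}/2$, so the intervals $[\tau_{n},\tau_{n}+T_{\delta}]$ cover $[T_{*},\infty)$ and the patched strong solutions yield the unique strong solution on $[T_{*},\infty)$ which equals $U$. This proves (1) and simultaneously $\|\bar{u}(t)\|_{H}\le 2\delta$ for all $t\ge T_{*}(\delta)$; since $\delta>0$ was arbitrary, (2) follows.

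The main obstacle is the quantitative interplay between the local existence time $T_{\delta}$ and the tail $\int_{T(\delta)}^{\infty}\|\nabla\bar{u}\|_{2}^{2}\,\mathrm{d}\tau$, both of which shrink with $\delta$: one must guarantee that ``restart'' times with small $H$-norm keep appearing at least once in every interval of length $T_{\delta}/2$ beyond $T(\delta)$, which is the purpose of the calibrated estimate above. A secondary point is that weak-strong uniqueness in Assumption~\ref{Assumption on WS}(3) must be invokable at the restart time $T_{*}$ rather than only at $t=0$; this is where the autonomy of the system and the time-translation invariance of \eqref{eq:SEI} and of Lemma~\ref{lem:ConsAngularMomentum} are used.
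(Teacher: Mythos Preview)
Your proposal is correct and follows essentially the same route as the paper's proof: both use the strong energy inequality to bound $\int_{0}^{\infty}\|\nabla\bar{u}\|_{2}^{2}\,\mathrm{d}\tau$, then calibrate the tail of this integral against the uniform local time $T_{\delta}$ from Corollary~\ref{cor:Corollary} so that in every subinterval of length $T_{\delta}/2$ beyond some $T_{*}$ a restart time with $\|\bar{u}\|_{H}\le\delta$ can be found, and finally patch local strong solutions together via weak-strong uniqueness to obtain the global strong solution with the uniform bound $\|\bar{u}(t)\|_{H}\le 2\delta$. Your use of the mean-value theorem and the explicit constant $c=(1+C_{p}^{2})^{1/2}$ is a minor cosmetic variation on the paper's contradiction argument, and your remarks on time-translation invariance for invoking Assumption~\ref{Assumption on WS}(3) at $T_{*}$ make explicit a point the paper leaves implicit.
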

\begin{proof}
By assumption, we have \eqref{eq:SEI} and thus the total dissipation
\begin{equation}
2\nu\int_{0}^{\infty}\Vert\nabla\bar{u}(\tau)\Vert_{2}^{2}\,\mathrm{d}\tau\leq E(0)\label{eq: bounded dissipation}
\end{equation}
 is bounded. It follows that for every $d>0$, there is a time $t_{d}\geq0$
such that 
\[
\int_{t_{d}}^{\infty}\Vert\nabla\bar{u}(s)\Vert_{2}^{2}\,\mathrm{d}s<d.
\]
Let $\delta>0$ and $d\leq\frac{T_{\delta}}{2}\delta(1+C_{p})$, where
$T_{\delta}$ is the constant from Corollary \ref{cor:Corollary}.
Again by \eqref{eq: bounded dissipation}, there is a time $T_{*}(\delta)\geq t_{d}$
such that $\Vert\bar{u}(T_{*})\Vert_{H}\leq\delta$ and this choice
is reasonable since $\bar{u}\in C_{w}([0,\infty;L^{2}(\mathcal{F}))$.
By Theorem \ref{thm:mainResultN-1} and Assumption \ref{Assumption on WS}.3,
$(\bar{u},\io)$ is unique and strong on $(T_{*},T_{*}+T_{\delta})$.
Moreover, there must be a point in time $t_{\delta}\in(T_{*}+\frac{T_{\delta}}{2},T_{*}+T_{\delta})$,
such that again $\Vert\bar{u}(t_{\delta})\Vert_{H}\leq\delta$, because
if we assume the contrary, then 
\[
\int_{T_{*}+\frac{T_{\delta}}{2}}^{T_{*}+T_{\delta}}\Vert\bar{u}(s)\Vert_{H}\,\mathrm{d}s\geq(1+\frac{1}{C_{p}})\int_{T_{*}+\frac{T_{\delta}}{2}}^{T_{*}+T_{\delta}}\Vert\nabla\bar{u}(s)\Vert_{2}\,\mathrm{d}s\geq(1+\frac{1}{C_{p}})\frac{T_{\delta}}{2}\delta>d.
\]
This shows that for every $\delta>0$, the strong solution starting
from $T_{*}$ can be extended indefinitely by glueing together intervals
of length $>\frac{T_{\delta}}{2}$. Moreover, for all $t\geq T_{*}(\delta)$,
$\Vert\bar{u}(t)\Vert_{H}\leq2\delta$ by Corollary \ref{cor:Corollary}.
This shows that $\Vert\bar{u}(t)\Vert_{H}\to0$ as $t\to\infty$. 
\end{proof}

\section{\label{sec:Application-of-LaSalle's}Application of LaSalle's invariance
principle and proof of the main result}

It remains to show the asymptotics for $\Omega$ in Theorem \ref{thm:MR}.
We use a modification of LaSalle's Invariance Principle. Following
along the lines of the proof of \cite[Thm. 9.2.7]{CazenaveHaraux1998},
we reprove this principle in the present situation as small adjustments
have to be made due to the facts that we cannot a priori work with
a globally defined semiflow on a complete metric space and that we
have to single out trajectories. Throughout this section, let $A_{0}\in\mathbb{R}^{3}$
and $\bar{u}_{0}\in L_{\sigma}^{2}(\mathcal{F})$ be fixed. Let $U$
be a weak solution corresponding to these inital values and choose
$\delta>0$, e.g. $\delta=1$, such that $T_{*}=T_{*}(\delta)$ from
the proof of Proposition \ref{prop:GlobalSol} is given with $\Vert\bar{u}(t)\Vert_{H}\leq2\delta$
for all $t\geq T_{*}$. We set 
\[
z_{0}:=(\bar{u}(T_{*}),\io(T_{*}))
\]
and start a new time $t\in\mathbb{R}_{+}$ at $T_{*}$. We define
$\mathcal{O}(z_{0}):=\bigcup_{t\geq0}\{S_{t}z_{0}\}$ to be the orbit
of $z_{0}$ and note that for all $t\geq0$, $S_{t}:\mathcal{O}(z_{0})\subset\mathcal{Z}_{2\delta}\to\mathcal{O}(z_{0})$
is well-defined. Let 
\[
\omega(z_{0}):=\{z\in\mathcal{Z}:\exists t_{n}\overset{n\to\infty}{\to}\infty\:\text{such that }S_{t_{n}}z_{0}\to z\}
\]
be the $\omega$-limit set of $z_{0}$.
\begin{prop}
\label{prop:Lyapunov1}We collect the following properties of $\mathcal{O}(z_{0})$.
\begin{enumerate}
\item \label{enu:911}The closure of $\mathcal{O}(z_{0})$ in $\mathcal{Z}$
satisfies $\overline{\mathcal{O}(z_{0})}^{\mathcal{Z}}=\mathcal{O}(z_{0})\cup\omega(z_{0})\subset\mathcal{Z}_{2\delta}$. 
\item \label{enu:912}$\mathcal{O}(z_{0})$ is relatively compact in $\mathcal{Z}$. 
\item \label{enu:913}We have $\lim_{t\to\infty}d(S_{t}z_{0},\omega(z_{0}))=0$,
where $d(\cdot,\cdot)$ is the distance induced by the $\mathcal{Z}$-norm
on $\mathcal{Z}_{2\delta}$.
\item \label{enu:914}For $0\leq t\leq T_{2\delta,max}$, $\omega(z_{0})$
is invariant under $S_{t}$. 
\end{enumerate}
\end{prop}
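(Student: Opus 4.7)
The four properties are the classical ingredients one needs to apply LaSalle's invariance principle; the whole proof rests on a single non-trivial ingredient, namely the decay $\Vert\bar{u}(t)\Vert_{H}\to 0$ from Proposition~\ref{prop:GlobalSol}. Combined with the uniform bound $\Vert\bar{u}(t)\Vert_{H}\leq 2\delta$ for $t\geq 0$ (built into the choice of $z_{0}$) and with the boundedness of $\Phi_{\vert A_{0}\vert}$ in $\mathbb{R}^{3}$, these imply relative compactness of the orbit for free, which is the hard fact in infinite dimensions. The remaining three statements are then soft topological consequences, modeled on \cite[Thm. 9.2.7]{CazenaveHaraux1998}.

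For \eqref{enu:911}, I would pick $z\in\overline{\mathcal{O}(z_{0})}^{\mathcal{Z}}$ with $S_{t_{n}}z_{0}\to z$. If $(t_{n})$ admits a bounded subsequence $t_{n_{k}}\to t^{*}\in[0,\infty)$, then continuity of $t\mapsto S_{t}z_{0}$ (Corollary~\ref{cor:Corollary}\eqref{enu:61d}) forces $z=S_{t^{*}}z_{0}\in\mathcal{O}(z_{0})$. Otherwise $t_{n}\to\infty$ along a subsequence and $z\in\omega(z_{0})$ by definition. The inclusion in $\mathcal{Z}_{2\delta}$ is immediate from $\Vert\bar{u}(t)\Vert_{H}\leq 2\delta$ for all $t\geq 0$ and $\iota(t)\in\Phi_{\vert A_{0}\vert}$ by Assumption~\ref{Assumption on WS}\eqref{enu:711}.

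For \eqref{enu:912}, the main step is to extract a convergent subsequence from an arbitrary $(S_{t_{n}}z_{0})$. On the $\Phi_{\vert A_{0}\vert}$-component this is automatic since $\Phi_{\vert A_{0}\vert}\subset\mathbb{R}^{3}$ is bounded. On the $H$-component I would split: if $(t_{n})$ has a bounded subsequence, continuity on $[0,T]$ provides a convergent subsequence in $H$; if $t_{n}\to\infty$, then $\bar{u}(t_{n})\to 0$ in $H$ by Proposition~\ref{prop:GlobalSol}(2). This is the step I consider the genuine core of the proposition, and it is the reason why Proposition~\ref{prop:GlobalSol} must be proved before this section.

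For \eqref{enu:913} I would argue by contradiction: if $d(S_{t_{n}}z_{0},\omega(z_{0}))\geq\varepsilon$ along some $t_{n}\to\infty$, relative compactness yields a subsequence converging to some $z\in\mathcal{Z}$, and $z\in\omega(z_{0})$ by definition, contradicting the lower bound. For \eqref{enu:914}, given $z\in\omega(z_{0})$ with $S_{t_{n}}z_{0}\to z$, and $t\in[0,T_{2\delta,\mathrm{max}}]$, both $S_{t_{n}}z_{0}$ and $z$ lie in $\mathcal{Z}_{2\delta}$, so Corollary~\ref{cor:Corollary}\eqref{enu:61a} gives $S_{t}(S_{t_{n}}z_{0})\to S_{t}z$; by the semigroup property \eqref{enu:61c}, the left-hand side is $S_{t+t_{n}}z_{0}$ with $t+t_{n}\to\infty$, so $S_{t}z\in\omega(z_{0})$. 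The only points where some care is needed are that $S_{t}$ is \emph{a priori} not defined globally on $\mathcal{Z}$ but only on $\mathcal{Z}_{2\delta}$ up to $T_{2\delta,\mathrm{max}}$, which is why the statement in \eqref{enu:914} is restricted to $t\leq T_{2\delta,\mathrm{max}}$, and the fact that continuity requires both the approximating sequence and the limit to lie in $\mathcal{Z}_{2\delta}$, both of which are guaranteed by \eqref{enu:911}.
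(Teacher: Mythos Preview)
Your proposal is correct and follows essentially the same route as the paper. The paper is terser---it dispatches \eqref{enu:911} with ``follows by definition'' and \eqref{enu:912} with a one-line appeal to Proposition~\ref{prop:GlobalSol}---but your more explicit case splits (bounded versus unbounded time sequences) and your identification of the decay $\Vert\bar{u}(t)\Vert_{H}\to 0$ as the only genuinely infinite-dimensional ingredient are exactly what underlies the paper's argument; the proofs of \eqref{enu:913} and \eqref{enu:914} are identical.
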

\begin{proof}
The first property \eqref{enu:911} follows by defnition. By Proposition
\ref{prop:GlobalSol}, 
\[
\lim_{t\to\infty}S_{t}z_{0}\subset\{0\}\times\Phi_{\vert A_{0}\vert},
\]
which implies \eqref{enu:912}. In order to show \eqref{enu:913},
we assume that to the contrary, there is an $\varepsilon>0$ and a
sequencce $t_{n}\overset{n\to\infty}{\to}\infty$ such that $d(S_{t_{n}}z_{0};\omega(z_{0}))\geq\varepsilon$.
Then by relative compactness, there is a subsequence $t_{n_{k}}\overset{k\to\infty}{\to}\infty$
such that $S_{t_{n_{k}}}z_{0}\to\overline{z}\in\omega(z_{0})$, yielding
a contradiction. Finally, for all $\overline{z}\in\omega(z_{0})\subset Z_{2\delta}$,
\[
S_{t}\overline{z}=S_{t}(\lim_{n\to\infty}S_{t_{n}}z_{0})=\lim_{n\to\infty}S_{t+t_{n}}z_{0}\subset\omega(z_{0})\subset\mathcal{Z}_{2\delta}
\]
 by the continuity of $S_{t}$ on $\mathcal{Z}_{2\delta}$, cf. Corollary
\ref{cor:Corollary}. This proves \eqref{enu:914}.\end{proof}
\begin{prop}
The total kinetic energy $E(\bar{u}(t),\io(t))=E(t)$ is a strict
Lyapunov functional for $\mathcal{O}(z_{0})$ and the equilibrium
set $\mathcal{E}:=\{z\in\mathcal{Z}_{2\delta}:\exists0\leq t<T_{2\delta,max},S_{t}z=z\}$
is characterized by 
\[
\mathcal{E}=\{0\}\times\{\oO\in\Phi_{\vert A_{0}\vert}:\oO\text{ is an eigenvector of }I\}.
\]
\end{prop}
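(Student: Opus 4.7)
The plan is to derive both assertions from the strong energy equality enjoyed by strong solutions (which applies on $\mathcal{O}(z_{0})$ by Proposition~\ref{prop:GlobalSol}) together with a curl argument that implements Zhukovskiy's intuition recalled in the introduction.

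For the Lyapunov property, on $\mathcal{O}(z_{0})$ the triple $(\bar u,\io)$ is strong, so \eqref{eq:EE_1} holds as an equality and
\[
E(S_{t}z)-E(S_{s}z)=-\nu\int_{s}^{t}\Vert\nabla\bar{u}(\tau)\Vert_{2}^{2}\,\mathrm{d}\tau\leq 0
\]
for every $z\in\mathcal{O}(z_{0})$ and $0\leq s\leq t<T_{2\delta,max}$; hence $t\mapsto E(S_{t}z)$ is non-increasing. To obtain strictness, suppose $E(S_{t}z)=E(z)$ for some $t>0$. Then $\nabla\bar u\equiv 0$ on $(0,t)\times\mathcal{F}$, and since $\bar u|_{\Gamma}=0$, Poincaré's inequality forces $\bar u\equiv 0$ on $(0,t)$. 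From \eqref{eq:amTilde} this gives $I\tom\equiv 0$, hence $\tom\equiv 0$ and $\Omega\equiv\io$ on $(0,t)$.

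With $\bar u\equiv 0$ on $(0,t)$, the first line of \eqref{eq:A-1-1} collapses to $\io'(\tau)\times y=-\nabla\bar p(\tau,y)$ pointwise in $\tau\in(0,t)$. Taking the distributional curl in $y$ and using the elementary identity $\nabla\times(a\times y)=2a$ for any constant $a\in\mathbb{R}^{3}$, I obtain $\io'(\tau)=0$ almost everywhere on $(0,t)$. Feeding this back into line four of \eqref{eq:A-1-1} with $\Omega=\io$ yields $\io\times I\io=0$, so $\io$ is constant on $(0,t)$ and is an eigenvector of $I$. By continuity and uniqueness of strong solutions (Theorem~\ref{thm:mainResultN-1}), the trajectory is identically $(0,\io(0))$ for all later times, so $z$ is itself a fixed point of $S_{\tau}$ for every $\tau$, i.e.\ $z\in\mathcal{E}$. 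This proves strictness and, at the same time, the inclusion $\mathcal{E}\subset\{0\}\times\{\oO\in\Phi_{\vert A_{0}\vert}:\oO\text{ is an eigenvector of }I\}$. The converse inclusion is immediate: if $\oO\in\Phi_{\vert A_{0}\vert}$ is an eigenvector of $I$, then $\bar u\equiv 0$, $\io\equiv\Omega\equiv\oO$, and $\bar p\equiv\mathrm{const}$ satisfy all four lines of \eqref{eq:A-1-1} (note $\oO\times I\oO=0$), so $(0,\oO)$ is a fixed point of every $S_{t}$.

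The only step that requires care is the curl computation, which is the analytic kernel of Zhukovskiy's argument: a rigid angular-acceleration field $a\times y$ on the cavity can be a spatial gradient only if $a=0$. The identity $\nabla\times(a\times y)=2a$ is elementary once $\io'$ is recognised as a vector independent of $y$, which is justified because $\io$ depends only on $t$; the remaining verifications are routine manipulations that I would not carry out in detail. The only small subtlety is ensuring that the equation $\io'\times y=-\nabla\bar p$ holds for almost every $\tau\in(0,t)$ in the strong sense, which is guaranteed because on $\mathcal{O}(z_{0})$ both $\bar u$ and $\io$ are strong, so $\nabla\bar p\in L^{2}$ and the curl may be taken in distributions.
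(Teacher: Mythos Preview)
Your proof is correct and follows essentially the same approach as the paper: both use the energy equality to force $\bar u\equiv 0$, deduce $\Omega=\io$, reduce the first line of \eqref{eq:A-1-1} to $\io'\times y=-\nabla\bar p$, and conclude $\io'=0$ before reading off the eigenvector condition from line four. The only difference is cosmetic: where the paper simply asserts that the linear field $\io'\times\cdot$ cannot be a gradient unless $\io'=0$, you make this explicit via the curl identity $\nabla\times(a\times y)=2a$, and you also spell out the (trivial) converse inclusion for $\mathcal{E}$ that the paper leaves implicit.
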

\begin{proof}
The function $E:\mathcal{Z}\to\mathbb{R}_{+}$ is continuous by definition
and decreasing along the trajectory of $z_{0}$ by the energy equality
\eqref{eq:EE_1}. If we assume that for some $z\in\mathcal{Z}_{2\delta}$
and $0<t\leq T_{2\delta,max}$ we have $E(S_{t}z)=E(z)$, then by
\eqref{eq:EE_1}, $\int_{0}^{t}\Vert\nabla\bar{u}\Vert_{2}^{2}(s)\,\mathrm{d}s=0$
and thus $\bar{u}(s)\equiv0$ on $(0,t)$ by Poincaré's inequality.
It follows that $\tom=0$ and $\Omega=\io$ on $(0,t)$. Since $S_{t}z$
gives a strong solution of \eqref{eq:A-1-1}, we obtain 
\[
\io'(s)\times y+\nabla\bar{p}(s,y)=0\;\text{for a.e. }(s,y)\in(0,t)\times\mathcal{F}
\]
from the first line. But the linear function $\io'\times\cdot$ cannot
be the gradient of a function $\bar{p}\in H^{1}(\mathcal{F})$, except
if $\io'=0$.  It follows that $S_{s}z=z$ for all $s\in[0,t]$,
i.e. $z\in\mathcal{E}$ and thus, $E$ is a strict Lyapunov functional.
Line 4 in \eqref{eq:A-1-1} shows that in this situation, $\io\times I\io=0$,
so that the vector $\io$ constant on $(0,t)$ must be an eigenvector
of $I$. This proves the claim on $\mathcal{E}$. 
\end{proof}
It remains to show that $\omega(z_{0})\subset\mathcal{E}$. Since
for all $t_{n}\overset{n\to\infty}{\to}\infty$, $(E(S_{t_{n}}z_{0}))_{n\in\mathbb{N}}$
is monotone and bounded from below, 
\[
E_{\infty}:=\lim_{t\to\infty}E(S_{t}z_{0})
\]
exists and $\text{for all }\bar{z}\in\omega(z_{0})$, 
\begin{equation}
E(\bar{z})=E_{\infty}.\label{eq:EconstOnOmLimesMenge}
\end{equation}
Let $\mathcal{E}_{\infty}:=\{(0,\oO)\in\mathcal{E}:\oO I\oO=E_{\infty})$.
Since $E$ is constant on $\omega(z_{0})$ by \eqref{eq:EconstOnOmLimesMenge}
and $E$ is a strict Lyapunov functional, $\omega(z_{0})\subset\mathcal{E}_{\infty}$.
By Proposition \ref{prop:Lyapunov1}, we conclude that $\lim_{t\to\infty}d(S_{t}z_{0},\mathcal{E}_{\infty})=0$.
Clearly, if the eigenvalues $\lambda_{j},j\in\{1,2,3\}$ of $I$ are
distinct, then $\mathcal{E}_{\infty}$ contains six isolated vectors
in $\mathbb{R}^{3}$ and thus $S_{t}z_{0}$ must converge to one of
them as $t\to\infty$. This proves Theorem \ref{thm:MR}.

\section{\label{sec:A-priori-characterizations}A priori characterizations
of $\oO$ }

From the initial data, we extract some information about which vector
$\oO$ is finally attained, using the elementary relations of $A_{0},E_{0}$
and $E_{\infty}$. Note that if $\vert A_{0}\vert=0$, then always
$\oO=0$, cf. Remark \ref{rem: Exponential rate}. Without loss of
generality, we assume that $I$ is given by a diagonal matrix. 
\begin{enumerate}
\item The first case is $I=\lambda\mathrm{Id}_{\mathbb{R}^{3}}$ for some
$\lambda>0$. This makes $\mathcal{S}$ essentially a sphere but $\mathcal{B}$
and $\mathcal{F}$ individually may still have a much more complicated
geometries. By Lemma \ref{lem:ConsAngularMomentum},
\[
\io(t)=\io(0)=A_{0}\ \text{for all }t\geq0
\]
and by Theorem \ref{thm:MR}, $\Omega(t)\to A_{0}$ as $t\to\infty$.
By Remark \ref{rem: Exponential rate}, the rate of convergence of
$\bar{u}$ in the $L^{2}(\mathcal{F})$-norm is exponetial.
\item The second case is that of $\mathcal{S}$ essentially being an \emph{egg},
i.e. $I=\mathrm{diag}(\lambda_{s},\lambda_{s},\lambda_{l})$ where
$0<\lambda_{s}<\lambda_{l}$. We show the following.

\begin{prop}
If
\begin{equation}
\tilde{E}(0)<\lambda_{l}(\frac{\lambda_{l}}{\lambda_{s}}-1)(\io_{0})_{3}^{2},\label{eq:egg}
\end{equation}
then $\oO=\mu_{l}e_{3}$, where $\mu_{l}$ is determined by $\mu_{l}^{2}\lambda_{l}^{2}=\vert I\oO\vert^{2}=\vert A_{0}\vert^{2}$. \end{prop}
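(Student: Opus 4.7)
The plan is to combine Theorem \ref{thm:MR} with the conservation law $|I\io(t)|^2 = |A_0|^2$ and the monotonicity of the kinetic energy $E$ to rule out convergence of $\Omega(t)$ to an eigenvector of $I$ lying in the $\lambda_s$-eigenspace. The guiding idea is that rotation about the short axis carries strictly more kinetic energy than rotation about the long axis (at fixed $|A_0|$), so if the initial energy budget is too small to reach the short-axis value, that case must be excluded.

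By Theorem \ref{thm:MR}, $\Omega(t) \to \oO$ with $\oO$ an eigenvector of $I = \mathrm{diag}(\lambda_s,\lambda_s,\lambda_l)$, so either $\oO \in \mathrm{span}\{e_1,e_2\}$ or $\oO = \mu_l e_3$. By Proposition \ref{prop:GlobalSol}, $\Ve\bar{u}(t)\Ve_H \to 0$, and hence by \eqref{eq:amTilde} also $\tom(t) \to 0$. Consequently $\io(t) = \Omega(t) - \tom(t) \to \oO$, and combined with the conservation $|I\io(t)|^2 = |A_0|^2$ from Lemma \ref{lem:ConsAngularMomentum}, this pins down $|I\oO|^2 = |A_0|^2$, i.e.\ $\lambda_s^2|\oO|^2 = |A_0|^2$ in the first case and $\lambda_l^2\mu_l^2 = |A_0|^2$ in the second.

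Next I would pass to the limit in \eqref{eq:Def_of_Energy}: using $\bar{u}(t) \to 0$ and $\tom(t) \to 0$,
\[
E_\infty := \lim_{t\to\infty} E(t) = \oO I \oO,
\]
which equals $|A_0|^2/\lambda_s$ in the first case and $|A_0|^2/\lambda_l$ in the second. The strong energy inequality \eqref{eq:SEI} then gives $E_\infty \leq E(0) = \tilde{E}(0) + \io_0 I \io_0$. A short direct computation using the diagonal form of $I$ and $|A_0|^2 = |I\io_0|^2 = \lambda_s^2((\io_0)_1^2+(\io_0)_2^2) + \lambda_l^2(\io_0)_3^2$ yields
\[
\frac{|A_0|^2}{\lambda_s} - \io_0 I \io_0 = \lambda_l\Bigl(\frac{\lambda_l}{\lambda_s}-1\Bigr)(\io_0)_3^2.
\]
Thus the first case would force $\tilde{E}(0) \geq \lambda_l(\lambda_l/\lambda_s - 1)(\io_0)_3^2$, directly contradicting hypothesis \eqref{eq:egg}. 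The only remaining possibility is $\oO = \mu_l e_3$.

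No serious obstacle is expected here. The only mildly delicate point is to justify that $\tom(t) \to 0$ so that $E_\infty = \oO I \oO$, but this is immediate from \eqref{eq:amTilde} together with Proposition \ref{prop:GlobalSol}. The hypothesis \eqref{eq:egg} is then precisely the energy-budget threshold asserting that the excess $|A_0|^2/\lambda_s - \io_0 I \io_0$ of short-axis kinetic energy over the initial rigid kinetic energy cannot be supplied by the relative fluid part $\tilde{E}(0)$.
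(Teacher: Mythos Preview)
Your argument is correct and follows essentially the same route as the paper: assume $\oO$ lies in the $\lambda_s$-eigenspace, compute $E_\infty = |A_0|^2/\lambda_s$ from conservation of $|I\io|$, and derive the contradiction $\tilde{E}(0)\geq\lambda_l(\lambda_l/\lambda_s-1)(\io_0)_3^2$ from the energy inequality $E_\infty\leq E(0)$. You are in fact slightly more careful than the paper in explicitly justifying $\tom(t)\to 0$ and hence $E_\infty=\oO I\oO$, but the substance is identical.
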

\begin{proof}
We use a contradiction argument and assume that $\oO=\mu_{1}e_{1}+\mu_{2}e_{2}$
for some $\mu_{1},\mu_{2}\in\mathbb{R}$. We show that this can only
occur if the energy initially stored in the $e_{3}$-axis is smaller
than the initial kinetic energy provided by the fluid, which is the
interpretation of \eqref{eq:egg}. The initial kinetic energy for
our problem is given by 
\[
E(0)=\lambda_{s}[(\io_{0})_{1}^{2}+(\io_{0})_{2}^{2}]+\lambda_{l}(\io_{0})_{3}^{2}+\tilde{E}(0)
\]
and the initial absolute value of angular momentum is given by 
\[
\vert A_{0}\vert^{2}=\lambda_{s}^{2}[(\io_{0})_{1}^{2}+(\io_{0})_{2}^{2}]+\lambda_{l}^{2}(\io_{0})_{3}^{2}.
\]
Thus,
\[
E(0)=\frac{\vert A_{0}\vert^{2}}{\lambda_{s}}-\lambda_{l}(\frac{\lambda_{l}}{\lambda_{s}}-1)(\io_{0})_{3}^{2}+\tilde{E}(0).
\]
The final absolute value of the angular momentum is equal to the initial
one and in this case it is given by 
\[
\vert A_{0}\vert^{2}=\vert I\oO\vert^{2}=\lambda_{s}^{2}[(\oO)_{1}^{2}+(\oO)_{2}^{2}].
\]
The final kinetic energy is thus given by 
\[
E_{\infty}=\lambda_{s}[(\oO)_{1}^{2}+(\oO)_{2}^{2}]=\frac{\vert A_{0}\vert^{2}}{\lambda_{s}}.
\]
For all weak solutions satisfying Assumption \ref{Assumption on WS},
by the energy inequality, we may crudely estimate 
\begin{equation}
E_{\infty}\leq E(0),\label{eq:CrudeEE}
\end{equation}
which, by the above calculations, implies 
\[
\lambda_{l}(\frac{\lambda_{l}}{\lambda_{s}}-1)(\io_{0})_{3}^{2}\leq\tilde{E}(0).
\]

\end{proof}
\item Analogous arguments apply in the general case $I=\mathrm{diag}(\lambda_{s},\lambda_{m},\lambda_{l})$,
where $0<\lambda_{s}<\lambda_{m}<\lambda_{l}$. We obtain the following
two characterizations: If initially 
\[
\lambda_{l}(\frac{\lambda_{l}}{\lambda_{m}}-1)(\io_{0})_{3}^{2}>\tilde{E}(0)+\lambda_{s}(1-\frac{\lambda_{s}}{\lambda_{m}})(\io_{0})_{1}^{2},
\]
and 
\begin{equation}
\lambda_{m}(\frac{\lambda_{m}}{\lambda_{s}}-1)(\io_{0})_{2}^{2}+\lambda_{l}(\frac{\lambda_{l}}{\lambda_{s}}-1)(\io_{0})_{3}^{2}>\tilde{E}(0),\label{eq:generalInstability}
\end{equation}
then $\oO=\mu_{l}e_{3}$ with $\mu_{l}^{2}\lambda_{l}^{2}=\vert A_{0}\vert$.
If only \eqref{eq:generalInstability} holds, then still $\oO=\mu_{s}e_{1}$
cannot be attained for any $\mu_{s}\in\mathbb{R}$. \end{enumerate}
\begin{rem}
A priori information about the size of the dissipation $2\nu\int_{0}^{\infty}\Vert\nabla\bar{u}(\tau)\Vert_{2}^{2}\,\mathrm{d}\tau$
improves the estimate in \eqref{eq:CrudeEE} and would yield better
criteria. This information may not be available in general for weak
solutions. It is shown in \cite{LyashenkoFriedlander1998B} that the
viscosity parameter $\nu$ is decisive for the asymptotics and this
can also be seen in numerical simulations \cite{GaldiMazzoneZunino2013}.
\end{rem}
\strut
\begin{rem}
In this context, it may be relevant that the system has a scaling:
for every solution $\bar{u},\io$, and every $\lambda\in\mathbb{R}$,
\[
\bar{u}_{\lambda}(s,x)=\lambda\bar{u}(\lambda^{2}s,\lambda x),\:(\io)_{\lambda}(s)=\lambda^{2}\io(\lambda^{2}s)
\]
also gives a solution. 
\end{rem}
\strut
\begin{rem}
The extension of Theorem \ref{thm:MR} to the case of external forcing
$F\neq0$, which could for example be given by a gravitational field,
is open. We expect the result to still hold if the forcing vanishes
suitably as $t\to\infty$, e.g. $F\in\mathcal{V}^{\infty}\cap L^{1}(0,\infty;L^{2}(\mathcal{F})\times\mathbb{R}^{6})$.
\end{rem}
\strut

\begin{acknowledgement*}
The author would like to thank Professor Paolo Galdi for introducing
her to this problem and for many inspiring discussions. The author
would like to thank Professor Alexander Mielke for many helpful conversations
and remarks.
\end{acknowledgement*}

\address{Weierstrass Institute, Mohrenstr. 39, 10117 Berlin}

\email{karoline.disser@wias-berlin.de}

\appendix
\bibliographystyle{plain}
\bibliography{/Users/Karo/Dropbox/Arbeit/MR}

\end{document}